\documentclass[11pt]{amsart}

\usepackage{pinlabel, graphicx}
\usepackage[all,knot,poly]{xy} 
\usepackage{hyperref}
\usepackage[usenames,dvipsnames]{color}
\usepackage{enumerate}



\newcommand{\leg}{\ensuremath{\Lambda}}
\DeclareMathOperator{\rk}{rank}

\newcommand{\ccc}{\ensuremath{\mathbb{C}}}
\newcommand{\rr}{\ensuremath{\mathbb{R}}}
\newcommand{\zz}{\ensuremath{\mathbb{Z}}}

\theoremstyle{plain}
\newtheorem{thm}{Theorem}[section]
\newtheorem{cor}[thm]{Corollary}
\newtheorem{lem}[thm]{Lemma}

\newtheorem{prop}[thm]{Proposition}

\theoremstyle{definition}
\newtheorem{defn}[thm]{Definition}

\newtheorem{ques}[thm]{Question}

\theoremstyle{remark}
\newtheorem{rem}[thm]{Remark}
\newtheorem{ex}[thm]{Example}

\numberwithin{equation}{section}

\def\dfn#1{{\textbf {#1}}}




\begin{document}

\title[Topologically Distinct Fillings]{Topologically Distinct Lagrangian and Symplectic Fillings}

\date{\today}

\author[C. Cao]{Chang Cao} \address{University of Pennsylvania, Philadelphia, PA 19104} \email{ccao@sas.upenn.edu}

\author[N. Gallup]{Nathaniel Gallup} \address{University of California, Davis,
CA 95616} \email{n.p.gallup@gmail.com}

\author[K. Hayden]{Kyle Hayden} \address{Boston College, Chestnut Hill, MA 02467} \email{kyle.hayden@bc.edu}

\author[J. Sabloff]{Joshua M. Sabloff} \address{Haverford College,
Haverford, PA 19041} \email{jsabloff@haverford.edu} 

\begin{abstract} We construct infinitely many Legendrian links in the standard contact $\rr^3$ with arbitrarily many topologically distinct Lagrangian fillings.  The construction is used to find  links in $S^3$ that bound topologically distinct pieces of algebraic curves in $B^4 \subset \ccc^2$, is applied to find contact $3$-manifolds with topologically distinct symplectic fillings, and is generalized to higher dimensions.
\end{abstract}

\maketitle


\section{Introduction}
\label{sec:intro}

Under certain conditions, the smooth topology of a symplectic manifold is governed by the contact topology of its boundary.  The first realization of this principle was Eliashberg's proof that any symplectically aspherical filling of the standard contact $S^3$ is diffeomorphic to $B^4$ \cite{yasha:filling}; see also \cite{mcduff:contact-type, ov:fillings} for similar results in higher dimensions and \cite{mcduff:rational, oo:simple-elliptic, oo:simple-singularity, stipsicz:T3-filling, wendl:strong-fillings} in low dimensions. In the relative setting, the original principle was first realized by a theorem of Chantraine \cite{chantraine}: if $L$ is an orientable Lagrangian filling of an oriented Legendrian link in the boundary of a Stein surface, then
\begin{equation} \label{eq:tb-euler}
  tb(\leg) = -\chi(L).
\end{equation}
In particular, if $\leg$ is connected --- i.e.\ a knot --- then the topology of a Lagrangian filling of $\leg$ is completely determined by $tb(\leg)$.

While recent work has revealed a wide variety of failures of this principle in the case of symplectic manifolds --- there are contact $3$-manifolds with arbitrarily many non-homeomorphic fillings \cite{lisca:lens}, infinitely many non-homeomorphic fillings \cite{oo:infinite-fillings,os:infinite-fillings,smith:infinite-fillings}, and even infinitely many homeomorphic but not diffeomorphic fillings \cite{aems} --- little is known in the relative setting.  In the somewhat softer setting of symplectic fillings of transverse knots in $S^3$, Geng \cite{geng:2-surfaces} notes that Auroux, Kulikov, and Shevchishin \cite{aks:hurwitz} found an example of a transverse braid with two topologically distinct symplectic fillings.   Motivated by Chantraine's result and by a question of Boileau and Fourrier \cite[Question 1]{boileau-fourrier} in an analogous situation in the realm of algebraic curves in $\ccc^{2}$, we  ask:

\begin{ques} \label{ques:main}
  Is the topology of a Lagrangian filling of an oriented Legendrian \emph{link} in $S^3$ completely determined by its Thurston-Bennequin invariant?
\end{ques}

In contrast to the case of Legendrian \emph{knots}, the main theorem of this paper implies that the answer to this question is a resounding \emph{no}. We call a Legendrian link with topologically distinct Lagrangian fillings \dfn{polyfillable}.

\begin{thm} \label{thm:leg-polyfillable}
  For every integer $N>1$, there exists a Legendrian link $\leg \subset (S^3, \xi_0)$ with $p(N)$ non-homeomorphic exact orientable Lagrangian fillings.
\end{thm}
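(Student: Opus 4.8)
The plan is to realize the count $p(N)$ by producing, for each integer partition $\mu = (\mu_1 \geq \cdots \geq \mu_k)$ of $N$, one exact orientable Lagrangian filling $L_\mu$ of a single carefully chosen Legendrian link $\leg$, and then to distinguish the $L_\mu$ from one another topologically. First I would fix a front diagram for a Legendrian link $\leg$ built from simple, repeated building blocks---Legendrian unknots joined by clasps---so that its Lagrangian fillings may be assembled by a sequence of elementary moves: Lagrangian saddle (pinch) moves that resolve clasps and either merge two components or add a handle, together with minimum (birth) moves that cap off standard unknots by disks. Any such sequence terminating at the empty link yields a decomposable Lagrangian cobordism from $\leg$ to $\emptyset$, i.e.\ a filling; these elementary cobordisms are exact and can be made orientation-compatible, so their composite $L_\mu$ is automatically an exact orientable Lagrangian filling. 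The combinatorial data I would feed into the move sequence is precisely the partition $\mu$: the block sizes $\mu_j$ dictate how many clasps are absorbed into each connected piece of the resulting surface.

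The decisive point is the choice of invariant. By Chantraine's theorem \eqref{eq:tb-euler}, every Lagrangian filling of $\leg$ has the same total Euler characteristic $-tb(\leg)$, so $\chi$ alone cannot separate the $L_\mu$; this is exactly the phenomenon that fails for links but not for knots. Instead I would use the finer homeomorphism invariant recording the multiset of topological types of the connected components of the filling---equivalently, the unordered list of pairs (genus, number of boundary circles). The key computation is that when a connected piece of $L_\mu$ bounds a sub-collection $S$ of the link components, Chantraine applied to that sublink forces its genus to be governed by the internal clasping of $S$; thus the genus of each piece reads off the corresponding part $\mu_j$. Consequently the component multiset of $L_\mu$ recovers $\mu$, and since distinct partitions give distinct multisets, the fillings $L_\mu$ are pairwise non-homeomorphic. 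As $\mu$ ranges over all $p(N)$ partitions of $N$, this yields $p(N)$ topologically distinct fillings.

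The hard part will be twofold. On the existence side, I must verify that for every partition the prescribed clasp-resolution sequence is genuinely realized by an embedded, exact, orientation-compatible Lagrangian cobordism---that the clasps are arranged so the required pinch moves are simultaneously available and so that no two merged groups are forced to link (which, by the intersection-number obstruction in $B^4$, would prevent their fillings from being disjoint). On the distinctness side, the real bookkeeping is tracking how each saddle and minimum move alters the connectivity and genus of the evolving surface and confirming that the final component multiset faithfully encodes $\mu$ with no accidental coincidences; here Chantraine's formula \eqref{eq:tb-euler} is the essential constraint keeping the genus count consistent. I expect this genus-versus-connectivity accounting, rather than the construction of the individual cobordisms, to be the principal technical obstacle.
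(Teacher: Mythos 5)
Your high-level strategy---a single fixed link, one filling per partition of $N$ obtained by choosing which groups of components to merge and which to keep disjoint, with the fillings distinguished by the multiset of component types computed componentwise from Chantraine's formula~\eqref{eq:tb-euler}---is the same as the paper's. But the concrete construction you propose cannot work, and the failure is precisely at the two points you defer as ``hard parts.'' First, if the unknot components are joined by honest clasps, then clasped components have linking number $\pm 1$, and two link components with nonzero linking number can never bound \emph{disjoint} pieces of a filling: the algebraic intersection number of disjoint surfaces in $B^4$ equals the linking number of their boundaries. So every clasp is forced to be internal to a group; for a chain of $N$ clasped unknots only the partition $(N)$ is realizable, and you get one filling, not $p(N)$. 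Second, if you instead entangle the unknots by tangles with zero linking number (so that splitting becomes possible), then Chantraine's formula kills merging: a connected filling component bounding a group of $\mu_j$ unknots with pairwise linking number zero would have $\chi = -tb \geq \mu_j$ (every Legendrian unknot has $tb \le -1$), while a connected orientable surface with $\mu_j \ge 2$ boundary circles has $\chi \le 0$. So with unknot building blocks merging is impossible, and again only one filling exists. In short, \eqref{eq:tb-euler} is not mere bookkeeping that ``keeps the genus count consistent''; applied componentwise it is an obstruction showing your proposed link is not polyfillable at all.

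The idea you are missing---the crux of the paper's proof---is that both constraints must be satisfied simultaneously: the entangling tangle (the paper's $\Pi_+$ tangle of Definition~\ref{defn:pi-tangle}) has inter-component crossings whose signs cancel, so distinct groups have linking number zero and may bound disjoint pieces; and the components are Legendrian $(2,x_i+2)$-torus links (trefoils in the case relevant to Theorem~\ref{thm:leg-polyfillable}), whose positive Thurston--Bennequin invariant $tb = x_i \ge 1$ supplies the Euler characteristic needed for merging to be possible. Theorem~\ref{thm: big theorem} then provides, at each tangle, both a ``join'' and a ``split'' Lagrangian cobordism, with the cost of $3$ in Euler characteristic absorbed either by the merged component or by one side, and the explicit decomposable fillings of torus links from Example~\ref{ex:torus-link} seed the induction. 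One further correction to your distinguishing invariant: in the fillings so constructed every component has genus $1$; what records the part $\mu_j$ is the Euler characteristic (equivalently the number of boundary circles) of the component, not its genus.
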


In fact, we can construct links that have an enormous amount of flexibility in their polyfillability, and the theorem above is a simple corollary of one of several more refined theorems that we state and prove in Section~\ref{sec:leg-proofs}.

Though the theorem above is interesting in and of itself, we may also apply it in three different directions.  
First, Theorem~\ref{thm:leg-polyfillable} leads to a solution of Boileau and Fourrier's original question:

\begin{thm} \label{thm:alg-polyfillable}
For every  integer $N>1$, there exists a link $K \subset S^{3}$ with $p(N)$ non-singular complex algebraic curves that intersect $S^3$ transversally in $K$ with the property that the intersections of the curves with $B^4$ are pairwise non-homeomorphic.
\end{thm}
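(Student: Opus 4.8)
The plan is to convert each of the exact Lagrangian fillings supplied by Theorem~\ref{thm:leg-polyfillable} into a smoothly isotopic piece of a non-singular algebraic curve, keeping the boundary fixed. First I would apply Theorem~\ref{thm:leg-polyfillable} to obtain a Legendrian link $\leg \subset (S^3,\xi_0)$ together with pairwise non-homeomorphic exact orientable Lagrangian fillings $L_1, \dots, L_{p(N)}$. Let $K \subset S^3$ be a transverse pushoff of $\leg$; since all of the $L_i$ share the Legendrian boundary $\leg$, they share the transverse boundary $K$, which will serve as the common boundary link in the statement. The bridge between the symplectic and algebraic worlds is the theory of quasipositive surfaces: by work of Rudolph, refined by Boileau--Orevkov, a surface in $B^4$ is smoothly isotopic (rel boundary) to the transverse intersection of a non-singular complex algebraic curve with $B^4$ precisely when it is a quasipositive surface, in which case its boundary is the closure of a quasipositive braid. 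It therefore suffices to realize each $L_i$ as a quasipositive surface with boundary $K$.

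The heart of the argument is the conversion of the Lagrangian fillings into quasipositive surfaces. I would use that the fillings produced in Section~\ref{sec:leg-proofs} are \emph{decomposable}: each $L_i$ is built from a sequence of elementary Lagrangian cobordisms --- Legendrian isotopies, minimum cobordisms capping off standard Legendrian unknots, and pinch (saddle) moves. The idea is to choose a front for $\leg$ and resolve it into a braid so that the transverse pushoff $K$ is realized as the closure of a quasipositive braid; this is the point at which positivity of the resulting braid must be checked, and it reflects the fact that $\leg$ bounds a Lagrangian filling (so that $tb(\leg)$ is maximal). Under this dictionary the elementary pieces of each decomposable filling translate directly: a minimum cobordism becomes a trivially braided disk, while a Lagrangian pinch becomes the attachment of a positively embedded band. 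Carrying this out for $L_i$ produces a braided surface $\Sigma_i \subset B^4$ with only positive bands --- a quasipositive surface --- whose boundary is $K$ and which is smoothly isotopic to $L_i$.

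With the surfaces $\Sigma_i$ in hand, the theorem of Rudolph and Boileau--Orevkov provides, for each $i$, a non-singular complex algebraic curve meeting $S^3$ transversally in $K$ whose intersection with $B^4$ is smoothly isotopic to $\Sigma_i$, and hence to $L_i$. Because smooth isotopy preserves homeomorphism type, the $p(N)$ algebraic curve pieces inherit the pairwise non-homeomorphism of the fillings $L_1, \dots, L_{p(N)}$ guaranteed by Theorem~\ref{thm:leg-polyfillable}, which completes the argument.

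The main obstacle is the middle step: faithfully translating the decomposable Lagrangian fillings into quasipositive braided surfaces while (a) keeping the common boundary $K$ fixed across all of the $L_i$ and (b) preserving the smooth isotopy type that distinguishes them. Concretely, one must verify that the braid obtained from a front of $\leg$ is genuinely quasipositive and that each Lagrangian pinch truly corresponds to a positive band. Once this dictionary is in place, the appeal to the algebraic realization of quasipositive surfaces and the preservation of topology under smooth isotopy are routine.
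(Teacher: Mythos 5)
Your endgame coincides with the paper's: both arguments funnel through quasipositivity and then invoke Rudolph's realization theorem (complemented by Boileau--Orevkov) to replace a quasipositive surface by a piece of a nonsingular algebraic curve meeting $S^3$ transversally, and both conclude by noting that the realization preserves the homeomorphism types that distinguish the fillings. The divergence --- and the gap --- is in how you reach quasipositivity. You propose to braid a front of $\leg$ and translate each elementary piece of a decomposable filling into a braided-surface piece ($0$-handles to trivially braided disks, pinches to positive bands). Such a dictionary can in fact be built, but it is a substantive theorem in its own right, not a verification one can defer: you would need to set up the front-to-braid correspondence, show that Legendrian isotopy of fronts does not disturb the braided structure, and prove that each Lagrangian pinch really becomes a positive band. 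Your proposal explicitly postpones exactly this ("the main obstacle"), so as written the argument has a hole at its load-bearing step. Moreover, the justification you gesture at --- that positivity of the braid "reflects the fact that $tb(\leg)$ is maximal" --- is not a correct mechanism: every knot type has Legendrian representatives attaining its maximal Thurston--Bennequin number, while most knot types are not quasipositive, so no numerical condition on $tb$ alone can certify quasipositivity. What must be exploited is the filling itself, and that exploitation is precisely the unproven step.

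The paper reaches quasipositivity by a much softer route that bypasses braiding and decomposability entirely. Lemma~\ref{lem:perturb} perturbs any oriented Lagrangian filling, in a collar of its boundary, to a symplectic surface $L^+$ with positive transverse boundary; the perturbation is an explicit local computation,
\begin{equation*}
\phi_+^* \omega = e^s\bigl( \sigma(s) + \sigma'(s) \bigr)\, ds \wedge dt \geq 0,
\end{equation*}
rather than a combinatorial dictionary. Theorem 2 of Boileau--Orevkov then states directly that such a symplectic surface in $B^4$ is quasipositive, and Rudolph's theorem finishes as in your outline. This route has two advantages over your plan: the hard step is replaced by a soft perturbation argument, and it applies to arbitrary Lagrangian fillings rather than only decomposable ones. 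If you wish to keep your approach, the decomposable-to-quasipositive dictionary can be established, but within the present proof the efficient fix is to substitute the perturbation lemma plus Boileau--Orevkov for your middle step.
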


We note that the method by which we connect Theorem~\ref{thm:leg-polyfillable} to Theorem~\ref{thm:alg-polyfillable} shows that Auroux, Kulikov, and Shevchishin's pair of transverse fillings \cite{aks:hurwitz} also yields a negative answer to Boileau and Fourrier's question.

Second, Theorem~\ref{thm:leg-polyfillable} may also be applied to the construction of multiple strong symplectic fillings of contact manifolds.  We will show below that we may perturb the Lagrangian fillings from Theorem~\ref{thm:leg-polyfillable} into symplectic fillings of transverse links.  Taking double branched covers over the symplectic fillings then yields the following:

\begin{thm} \label{thm:cont-polyfillable}
  For every positive integer $N$, there exists a contact $3$-manifold $(Y, \alpha)$ with strong symplectic fillings $(X_{1}, \omega_{1}), \ldots, (X_{N},\omega_{N})$ that have the same Euler characteristic, but for which
  $$\operatorname{rank} H_3(X_{k}, Y) = k-1.$$
In particular, the fillings $X_{k}$ do not differ solely by a symplectic blow-up.
\end{thm}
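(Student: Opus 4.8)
The plan is to realize the fillings $X_k$ as double branched covers of the symplectic ball $(B^4, \omega_{\mathrm{std}})$ over symplectic surfaces obtained by perturbing the Lagrangian fillings of Theorem~\ref{thm:leg-polyfillable}, and then to read the invariants $\chi(X_k)$ and $\operatorname{rank} H_3(X_k,Y)$ off the topology of those surfaces. First I would fix the Legendrian link $\leg \subset (S^3,\xi_0)$ together with the exact orientable Lagrangian fillings $L_1,\ldots$ produced by (the refined version of) Theorem~\ref{thm:leg-polyfillable}, viewing each $L_k$ as a properly embedded surface in $B^4$ with $\partial L_k = \leg$. Since a $2$-plane in a symplectic $4$-space is either Lagrangian or symplectic, and the symplectic condition is open and dense, a generic $C^1$-small perturbation of $L_k$ (adjusting its cylindrical end) makes it a symplectic surface $\Sigma_k \subset B^4$ with boundary the transverse pushoff $T$ of $\leg$; crucially, because every $L_k$ shares the Legendrian boundary $\leg$, every $\Sigma_k$ shares the same transverse link $T$, and each $\Sigma_k$ is smoothly isotopic to $L_k$. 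Taking the double cover of $B^4$ branched along the symplectic surface $\Sigma_k$ then produces a symplectic $4$-manifold $(X_k,\omega_k)$, with boundary the double branched cover $Y = \Sigma_2(S^3, T)$ carrying the contact structure $\alpha$ pulled back from $\xi_0$. As the branch locus $T$ is common to all of them, every $(X_k,\omega_k)$ fills the single contact manifold $(Y,\alpha)$; I would then verify that the perturbation $L_k \rightsquigarrow \Sigma_k$ and the covering $X_k \to B^4$ can be arranged to respect the Liouville data near the boundary, so that each $X_k$ is in fact a \emph{strong} filling.

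The Euler characteristic is immediate from the branched-cover formula $\chi(X_k) = 2\chi(B^4) - \chi(\Sigma_k) = 2 - \chi(L_k)$: Chantraine's equality~\eqref{eq:tb-euler} forces $\chi(L_k) = -tb(\leg)$ for every filling, so all the $X_k$ share the Euler characteristic $2 + tb(\leg)$. For the rank of $H_3(X_k,Y)$, Lefschetz duality identifies $H_3(X_k,Y) \cong H^1(X_k)$, so $\operatorname{rank} H_3(X_k,Y) = b_1(X_k)$ and the task reduces to computing the first Betti number of the double branched cover. I would compute this via the transfer (Smith) sequence for the covering involution, which kills the invariant part (since $H_1(B^4;\qq)=0$) and expresses $H_1(X_k;\qq)$ through the double cover of the complement $B^4 \setminus \Sigma_k$ and its sign local system. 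In the model cases — for instance $n$ unknotted, unlinked disks, where one finds $b_1 = n-1$ — this invariant is exactly a count of the "extra" rational classes coming from the component structure of the branch surface, and the refined fillings of Theorem~\ref{thm:leg-polyfillable} are engineered precisely so that it takes the value $k-1$ while $\chi(L_k)$ stays fixed. Selecting the $N$ fillings for which $b_1(X_k) = k-1$ then yields the stated family.

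The main obstacle I anticipate is twofold. On the symplectic side, one must confirm that the branched-cover form $\omega_k$ can be made convex at $\partial X_k$, i.e.\ that the construction genuinely produces strong rather than merely weak fillings; this requires controlling the Liouville vector field under both the perturbation and the covering in a neighborhood of $T$. On the topological side, the delicate point is that $b_1$ of a double branched cover depends on the linking of the branch surface and not on $\chi$ alone — the Hopf link and the $2$-component unlink already have branched covers with different first Betti numbers — so the argument must invoke the specific construction of the $L_k$ to guarantee that the desired $k-1$ rational homology classes actually survive into $X_k$. Finally, the blow-up statement follows formally: a symplectic blow-up increases $\chi$ by one while leaving $H_3(\,\cdot\,, Y)$ unchanged, whereas our fillings have equal Euler characteristic and pairwise distinct $\operatorname{rank} H_3(X_k,Y)$, so no two of them can differ solely by blow-ups.
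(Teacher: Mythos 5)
Your overall architecture matches the paper's: perturb the Lagrangian fillings to symplectic surfaces with a common transverse boundary, take double branched covers of $B^4$ to get strong fillings of a single $(Y,\alpha)$, compute $\chi$ (your formula $\chi(X_k)=2\chi(B^4)-\chi(L_k)$ plus Chantraine's equation~(\ref{eq:tb-euler}) is a correct and even more direct route than the paper's), reduce $\operatorname{rank} H_3(X_k,Y)$ to $b_1(X_k)$ by Lefschetz duality, and dispose of the blow-up claim at the end. The problem is that the one computation the theorem actually rests on --- that the filling with $k$ connected components produces $\operatorname{rank} H_3(X_k,Y)=k-1$ --- is never carried out. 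You correctly identify the danger (the Hopf link and the two-component unlink show that $b_1$ of a double branched cover is not determined by the component count of the branch locus alone), but your resolution is to say the fillings are ``engineered precisely'' to give $k-1$ and to ``select the $N$ fillings for which $b_1(X_k)=k-1$.'' That is circular: nothing in your argument shows such fillings exist among the constructed ones, and deciding which fillings have which $b_1$ \emph{is} the missing computation. A proof by selection only works if you can compute the invariant you are selecting on.

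The paper closes exactly this gap with Lemma~\ref{lem:h3-calc}. The tool is the Lee--Weintraub long exact sequence for double branched covers,
\begin{equation*}
\cdots \to H_*(B^4, L_{k} \cup S^3) \to H_*(X_{k},Y) \to  H_*(B^4, S^3) \to \cdots,
\end{equation*}
together with the injectivity of the connecting homomorphism $H_4(B^4,S^3)\to H_3(B^4,L_k\cup S^3)$ (their Theorem 2). Since $H_3(B^4,S^3)=0$, this exhibits $H_3(X_k,Y)$ as the cokernel of an injection of a rank-one group into $H_3(B^4,L_k\cup S^3)$, and the long exact sequence of the triple $(B^4, L_k\cup S^3, S^3)$ plus excision identify the latter with $H_2(L_k,\leg_N)$, which has rank equal to the number of components $k$. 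Hence $\operatorname{rank} H_3(X_k,Y)=k-1$. Note what this buys: the relative group $H_3(X_k,Y)$ is computed entirely from the homology of the pair $(B^4,L_k\cup S^3)$, so the ``linking of the branch surface'' that you (rightly) worried about for branched covers of $S^3$ simply does not enter in this relative, four-dimensional setting --- but one needs the transfer-type exact sequence to see this, and that is precisely the step your proposal leaves as an assertion. Your Smith-sequence sketch is the right instinct and is essentially the same sequence; to complete your argument you would have to run it to the end, at which point you would recover the paper's lemma (up to the duality $H_3(X_k,Y)\cong H^1(X_k)$ you invoke).
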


We note that this result is not as spectacular as some of those mentioned in the discussion of symplectic fillings, above, but we believe that the branched cover technique is novel.

Finally, we may extend the theorem to higher dimensions while dropping the requirement that the Legendrian is disconnected:

\begin{thm} \label{thm:high-d-leg-polyfillable}
For every $n > 1$ and every  integer $N>1$, there exists a connected Legendrian submanifold $\leg \subset (S^{2n+1}, \xi_0)$ with $N$  non-homeomorphic connected exact orientable Lagrangian fillings.
\end{thm}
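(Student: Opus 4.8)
The plan is to produce the higher-dimensional examples by front-spinning the disconnected, planar examples of Theorem~\ref{thm:leg-polyfillable} and then reconnecting them with a tree of Legendrian handles. First I would use the refined constructions of Section~\ref{sec:leg-proofs} to fix a Legendrian link $\leg_0\subset(S^3,\xi_0)$, with $k$ components, carrying $N$ exact orientable Lagrangian fillings $L_1,\dots,L_N$ whose numbers of connected components $c_1<\dots<c_N$ are pairwise distinct; this is possible because the refined constructions realize fillings distinguished by how the $k$ boundary circles are partitioned among the surface components, so that every component count between $1$ and $k$ is achieved once $k\ge N$. Since each $L_j$ is an orientable surface with boundary $\leg_0$, Chantraine's equality \eqref{eq:tb-euler} gives $\chi(L_j)=-tb(\leg_0)=:E$, whence $b_1(L_j)=c_j-E$ is also pairwise distinct.

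Next I would apply front $S^{n-1}$-spinning, which carries $\leg_0\subset S^3$ (a Legendrian of dimension $1$) to a Legendrian $\Sigma\leg_0\subset(S^{2n+1},\xi_0)$ of dimension $n$ with the same number of components, and carries each filling to an exact orientable Lagrangian filling $\Sigma L_j$ diffeomorphic to $L_j\times S^{n-1}$. Spinning preserves exactness and orientability, so each $\Sigma L_j$ is an exact orientable filling of $\Sigma\leg_0$. The key point is that the Künneth formula records the old component count in a middle-degree Betti number that will be inaccessible to the reconnecting handles: for $n\ge 3$ one has $b_{n-1}(\Sigma L_j)=b_0(L_j)=c_j$, and for $n=2$ one has $b_2(\Sigma L_j)=b_1(L_j)=c_j-E$.

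Then I would pick a spanning tree on the $k$ components of $\Sigma\leg_0$ and attach $k-1$ Legendrian handles (tubes) along its edges, using higher-dimensional Legendrian ambient surgery, to obtain a single connected Legendrian $\leg\subset(S^{2n+1},\xi_0)$. Filling each tube by an exact Lagrangian $1$-handle extends $\Sigma L_j$ to a filling $L_j'$ of this common $\leg$. A spanning tree meets the blocks of a partition into $c$ parts in exactly $c-1$ edges joining distinct blocks, so precisely $c_j-1$ of these handles merge components of $\Sigma L_j$ and the other $k-c_j$ create a loop; either way $L_j'$ is connected. Because every attachment is an index-$1$ handle, it changes homology only in degrees $0$ and $1$, so $b_{n-1}(L_j')=c_j$ for $n\ge 3$ (respectively $b_2(L_j')=c_j-E$ for $n=2$) is unchanged. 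Hence $L_1',\dots,L_N'$ are pairwise non-homeomorphic connected exact orientable Lagrangian fillings of the connected Legendrian $\leg$, as desired.

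I expect the main obstacle to be realizing these handle attachments within the exact Lagrangian category. Following Dimitroglou Rizell's theory of Lagrangian handle attachment, I would model each tube on a standard Lagrangian $1$-handle and check that it glues to $\Sigma L_j$ preserving embeddedness and exactness. The delicate case is a loop-creating handle, where the new $1$-cycle $\gamma$ has action $\int_\gamma\lambda = A_{\mathrm{handle}}+\big(f(q)-f(p)\big)$ for a primitive $f$ of $\lambda|_{\Sigma L_j}$ and feet $p,q$; I would force $\int_\gamma\lambda=0$ by scaling the local model so that $A_{\mathrm{handle}}=f(p)-f(q)$, while for a merging handle I would instead shift the primitive on one component by a constant. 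Checking that these normalizations can be made simultaneously for all $N$ fillings, with the tubes kept disjoint and the Legendrian $\leg$ held fixed, is the technical heart of the argument.
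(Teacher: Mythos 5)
Your proposal is correct and follows essentially the same route as the paper: spin the disconnected polyfillable link of Corollary~\ref{cor:polyfillable-diff-num-comp}, reconnect its components by Lagrangian $1$-handles (which only affect $H_0$ and $H_1$), and distinguish the resulting connected fillings by a Betti number that the handles cannot change. The differences are implementational rather than conceptual --- the paper spins once by $S^1$, attaches its $N-1$ handles at the surface level in $\rr^5$, distinguishes fillings by $\rk H_1$, and then spins repeatedly to reach $\rr^{2n+1}$, whereas you spin once by $S^{n-1}$ and attach handles in the top dimension, using $b_{n-1}$; moreover, the exactness issue you flag as the technical heart is handled in the cited references by requiring primitives of decomposable fillings and elementary handle cobordisms to be constant (not merely locally constant) on their cylindrical ends, so concatenation is automatically exact and no action normalization is needed.
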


As we shall discuss  in Section~\ref{ssec:high-d}, this construction implies that there are Legendrian submanifolds with arbitrarily large sets of generating family homology or linearized Legendrian contact homology invariants, generalizing the result of Melvin and Shrestha \cite{melvin-shrestha} for knots in the standard contact $\rr^3$.

The remainder of the paper is organized as follows.  We sketch the relevant background material on Lagrangian and symplectic fillings in Section~\ref{sec:background}.  The proof of Theorem~\ref{thm:leg-polyfillable} appears in Section~\ref{sec:leg-proofs}, while its application to the proofs of Theorems~ \ref{thm:alg-polyfillable},  \ref{thm:cont-polyfillable}, and \ref{thm:high-d-leg-polyfillable}  appears in Section~\ref{sec:applications}.  

\subsection*{Acknowledgements}

We thank Matt Hedden for bringing Boileau and Fourrier's question to our attention and for explaining the method for deriving Theorem~\ref{thm:alg-polyfillable} from Theorem~\ref{thm:leg-polyfillable}.  We thank Curtis Greene for explaining the proper viewpoint on compositions and partial orders that we use in stating Theorem~\ref{thm:general case}. Finally, we  gratefully acknowledge support from NSF grant DMS-0909273.

\section{Lagrangian and Symplectic Fillings}
\label{sec:background}

We begin this section by setting down some of the basic language of Legendrian knot theory, Lagrangian fillings, and symplectic fillings of contact manifolds. We assume a basic familiarity with Legendrian knot theory and contact topology; for further background, see the introductory articles of Etnyre \cite{etnyre:intro, etnyre:knot-intro} or Geiges' text \cite{geiges:intro}.  With the appropriate language established, we review the geometric tools for constructing Lagrangian surfaces that underlie the main theorem.

\subsection{Lagrangian and Symplectic Fillings}
\label{ssec:fill-background}

We begin by considering Legendrian links in a  contact manifold $(Y, \alpha)$ and their Lagrangian fillings in the symplectization $(\rr \times Y, d(e^t \alpha))$.  A Lagrangian submanifold $L \subset \rr \times Y$ is a \dfn{Lagrangian cobordism} between Legendrian links $\leg_-, \leg_+ \subset Y$ if there exist real numbers $T_- < T_+$ so that 
\begin{align*}
	L \cap (-\infty, T_-] \times Y &= (-\infty, T_-] \times \leg_-,\\
	L \cap [T_+, \infty) \times Y &= [T_+,\infty) \times \leg_+.
\end{align*}
We write $\leg_- \prec_L \leg_+$ to denote this situation. In the language of \cite{chantraine:collar}, this definition requires our Lagrangian cobordisms to be collared.  A Lagrangian cobordism from $\emptyset$ to $\leg$ is a \dfn{Lagrangian filling} of $\leg$.  In this paper, we will be primarily concerned with the standard contact structures on $\rr^3$ and $S^3$.

A symplectic manifold $(X,\omega)$ is a \dfn{strong symplectic filling} of a contact manifold $(Y,\alpha)$ if $Y = \partial X$ and there exists a non-vanishing Liouville vector field $W$ for $\omega$ that is transverse to $Y$ and so that $\alpha = \iota_W \omega|_Y$.  In lieu of the Liouville vector field, we could also have specified that there is a collar neighborhood of $Y$ in $X$ that is symplectomorphic to $((-a,0] \times Y, d(e^t \alpha))$.  The canonical example of a strong symplectic filling is the filling of the standard contact structure on $S^3$ by the standard symplectic structure on $B^4$.  Notice that just as any Legendrian link $\leg$ in the standard contact $\rr^3$ can be thought of as living in the standard contact $S^3$, so any Lagrangian filling of $\leg$ immediately yields a properly embedded collared Lagrangian surface in $B^4$. We may thus make our constructions in the symplectization of $\rr^3$ but state the resulting theorems in the filling $B^4$ of $S^3$.

\subsection{Constructions of Lagrangian Fillings}
\label{ssec:constr-background}

Our main geometric tool for constructing Lagrangian fillings, embodied by the theorem below, was first announced by Ekholm, Honda, and K\'alm\'an \cite{ehk:leg-knot-lagr-cob}.  The first part of the theorem was proven by Chantraine \cite{chantraine}, and the last two parts are formulated as in \cite{bst:construct}; see also Rizell's work \cite{rizell:surgery}.

\begin{thm}[\cite{bst:construct, chantraine, ehk:leg-knot-lagr-cob, rizell:surgery}]
\label{thm:construct}
If two oriented Legendrian links $\leg_{-}$ and $\leg_{+}$ in the standard contact $\rr^{3}$ are related by any of the following three moves, then there exists an oriented exact Lagrangian cobordism  $\leg_{-} \prec_L \leg_{+}$.
\begin{description}
\item[Isotopy] $\leg_{-}$ and $\leg_{+}$ are Legendrian isotopic.
\item[$\boldsymbol{0}$-Handle] The front of $\leg_{+}$ is the same as that of $\leg_{-}$ except for the addition of a disjoint Legendrian unknot as in the top of Figure~\ref{fig:construct}.
\item[$\boldsymbol{1}$-Handle] The fronts of $\leg_{\pm}$ are related as in the bottom of Figure~\ref{fig:construct}.
\end{description}
\end{thm}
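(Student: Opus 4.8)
The plan is to construct each of the three cobordisms explicitly inside the symplectization $(\rr \times \rr^3, d(e^t\alpha))$, in every case arranging that the surface is cylindrical---of the form $(-\infty,T_-]\times \leg_-$ or $[T_+,\infty)\times\leg_+$---outside a compact interval in the $t$-coordinate, and then verifying the four required properties (embedded, Lagrangian, oriented, exact) one move at a time. The elementary building block is that for any Legendrian $\leg$ the cylinder $\rr\times\leg$ is an embedded exact Lagrangian: since $\alpha$ vanishes on $T\leg$, the Liouville form $e^t\alpha$ restricts to $0$ on $\rr\times\leg$, so the cylinder is Lagrangian with locally constant primitive. Each move is then realized by replacing a cylindrical piece, localized in a compact $t$-interval (and, for the handle moves, in a Darboux chart), by a local model that limits to the appropriate cylinder at both of its ends.

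I would handle the \textbf{Isotopy} move first, since it should yield a trivial (cylindrical) cobordism. Given a Legendrian isotopy $\{\leg_s\}_{s\in[0,1]}$ I would record its generating contact Hamiltonian $H_s$ and form the Lagrangian trace of the isotopy. The technical heart here is that the naive suspension $\{(s,\leg_s)\}$ is not Lagrangian as it stands; one corrects it using $H_s$ and, crucially, reparametrizes by a function $s=\rho(t)$ that is constant near $t=\pm\infty$, so that the construction is supported in $[T_-,T_+]$ and is genuinely cylindrical at the ends. A coherent orientation and a single-valued primitive are then inherited from the one-parameter family.

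The two handle moves are local. For the \textbf{0-handle} I would glue in a standard Lagrangian minimum: the model is the Lagrangian disk filling a small standard Legendrian unknot (the $tb=-1$ unknot pictured at the top of Figure~\ref{fig:construct}, for which $tb=-\chi=-1$ is consistent with \eqref{eq:tb-euler}); it is orientable and exact by inspection and agrees with the trivial cylinder over the unknot for large $t$. For the \textbf{1-handle} I would write down an explicit Lagrangian saddle whose front interpolates between the two pictures at the bottom of Figure~\ref{fig:construct}, again tapering it to be cylindrical at both ends so that it splices cleanly into the ambient cobordism.

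The step I expect to be the main obstacle is the 1-handle. One must produce a concrete embedded Lagrangian saddle, confirm that the correction terms needed to enforce the Lagrangian condition neither spoil the front behavior demanded by the figure nor break the agreement with the cylindrical ends, and check that a single-valued primitive of $e^t\alpha$ persists across the splice so that the completed surface is genuinely exact; the isotopy construction poses a milder version of the same tapering difficulty. By comparison, orientability is the easy point: each local model is orientable, and the orientations glue because all three moves are compatible with the coorientation of the fronts.
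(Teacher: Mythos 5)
The paper does not prove Theorem~\ref{thm:construct} at all: it is quoted from the literature, with the isotopy case due to Chantraine and the handle moves to Ekholm--Honda--K\'alm\'an, Bourgeois--Sabloff--Traynor, and Rizell. So the honest baseline for comparison is those references, and your outline does follow the architecture that Chantraine and EHK use: trivial cylinders outside a compact $t$-interval, a corrected Lagrangian suspension for the isotopy move, and local models for the handle moves spliced in along Darboux charts. (Bourgeois--Sabloff--Traynor instead build the handle cobordisms via generating families, which is what later makes the compatibility claims in Proposition~\ref{prop:high-d-chv-num} available.)

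As a proof, however, your proposal has a genuine gap, and you flag it yourself: the 1-handle model is never constructed. The entire mathematical content of that case is the existence of an embedded, exact, oriented Lagrangian saddle, cylindrical outside a compact set, whose ends are cylinders over Legendrians with the two specified fronts; saying you would ``write down an explicit Lagrangian saddle'' and then check its properties is a restatement of the claim, not an argument. The same criticism applies, less severely, to the 0-handle disk, which you justify ``by inspection'' without exhibiting it. Two smaller points. First, in the isotopy move the correction of the naive suspension by the contact Hamiltonian displaces points in the Reeb direction, so embeddedness is not automatic: the reparametrization $s=\rho(t)$ must be chosen not only locally constant near the ends (for cylindricality) but with $\rho'$ small --- i.e., the cobordism must be stretched to be long --- so that the corrected suspension remains embedded; your write-up attributes the reparametrization only to the tapering issue. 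Second, orientability is not purely soft: the 1-handle move is permitted only when the two strands are oriented as in Figure~\ref{fig:construct} (compare Remark~\ref{rem:orientation}, where only one orientation of the Hopf link admits a decomposable filling), so orientation compatibility is a hypothesis feeding into the local model rather than something that ``glues automatically.''
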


\begin{figure}
	\centerline{\includegraphics{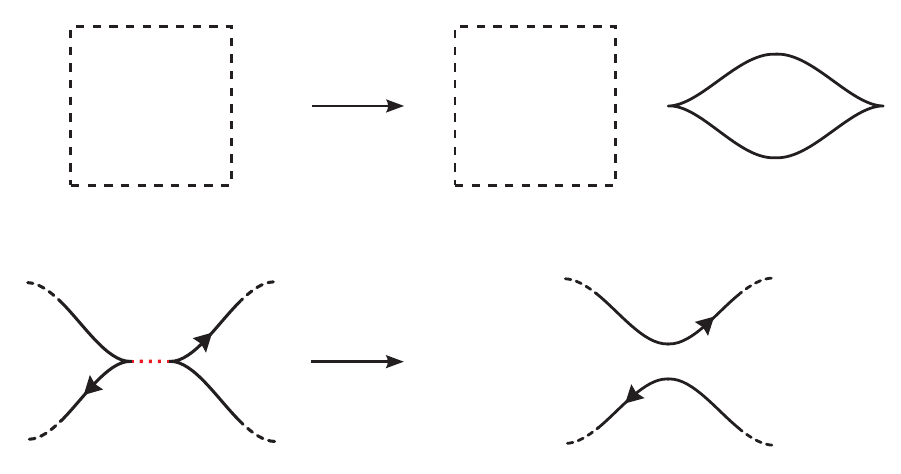}}
	\caption{The second and third moves of Theorem~\ref{thm:construct}.}
	\label{fig:construct}
\end{figure}

\begin{ex} \label{ex:torus-link}
Let $T_n$ denote the Legendrian $(2,n)$-torus link, where $n>0$ denotes the number of positive half-twists, shown in Figure~\ref{fig:(2,n)-torus}.  

	\begin{figure}
		\labellist
		\small
		\pinlabel $n-2$ [B] at 382 25
		\endlabellist
		\includegraphics[width=\linewidth]{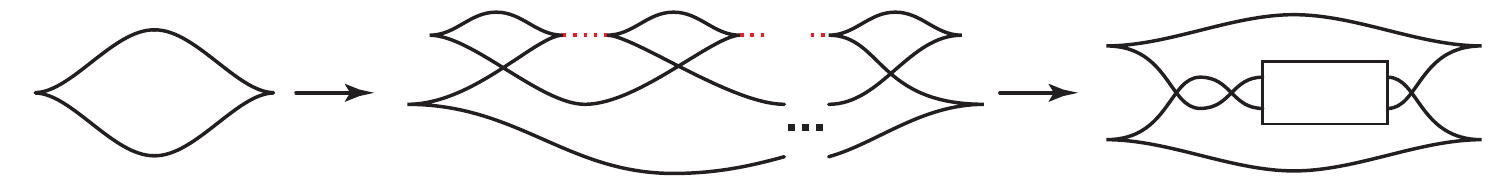}
		\caption{Construction of a filling of a $(2,n)$-torus link. }
		\label{fig:(2,n)-torus}
	\end{figure}

The link $T_n$ admits a decomposable filling with Euler characteristic $2-n$.  To see this, first create an unknot and then use the first Reidemeister move $n$ times on the top of the unknot as in Figure~\ref{fig:(2,n)-torus}.  Add $n - 1$ $1$-handles to obtain $T_n$. A simple calculation shows that $tb(T_n) = n - 2$, so by Equation~\ref{eq:tb-euler}, the Euler characteristic of the filling  is $2-n$. 
\end{ex}

In Section~\ref{ssec:high-d}, we will require Golovko's cobordism spinning construction \cite{golovko:tb} and a generalization of $1$-handle attachments to higher dimensions.  These constructions will be discussed as they are needed.
 
From now on, we will deliberately confuse the notation for a Legendrian link and  its front projection. In light of the above theorem, we say that an oriented Legendrian link has a \dfn{decomposable Lagrangian filling} if there exists a sequence of moves from Theorem~\ref{thm:construct} which takes $\emptyset$ to a front diagram of the link. It is an open question to determine if all Lagrangian fillings can be represented as decomposable fillings; see \cite{chantraine:collar} for further discussion.

\begin{rem} \label{rem:orientation}
	The orientations of the components of a link are important in the definition above.  For example, only one of the two orientations we may assign to the Hopf link allows for a decomposable filling.
\end{rem}

\section{Polyfillable Legendrian Links}
\label{sec:leg-proofs}

In this section, we prove Theorem~\ref{thm:leg-polyfillable}.  As mentioned in the introduction, we actually prove several more refined theorems that show that we can finely control the distribution of the topology of a filling across the components of a Legendrian link. 

\subsection{A First Example}
\label{ssec:first-ex}

We begin our discussion with a simple example of a polyfillable link $\leg^*$, which appears on the far right of Figure~\ref{fig:first example}.  Note that this example  is already enough to answer Question~\ref{ques:main}.  Constructions of two topologically distinct fillings of $\leg^*$ may be described as follows:

	\begin{figure}
		\labellist
		\large
		\pinlabel $\emptyset$ [b] at 20 85
		\small
		\pinlabel $\leg_1$ [bl] at 273 160
		\pinlabel $\leg_2$ [tl] at 273 18
		\pinlabel $\leg^*$ [tl] at 391 72
		\endlabellist
		\centerline{\includegraphics[width=\linewidth]{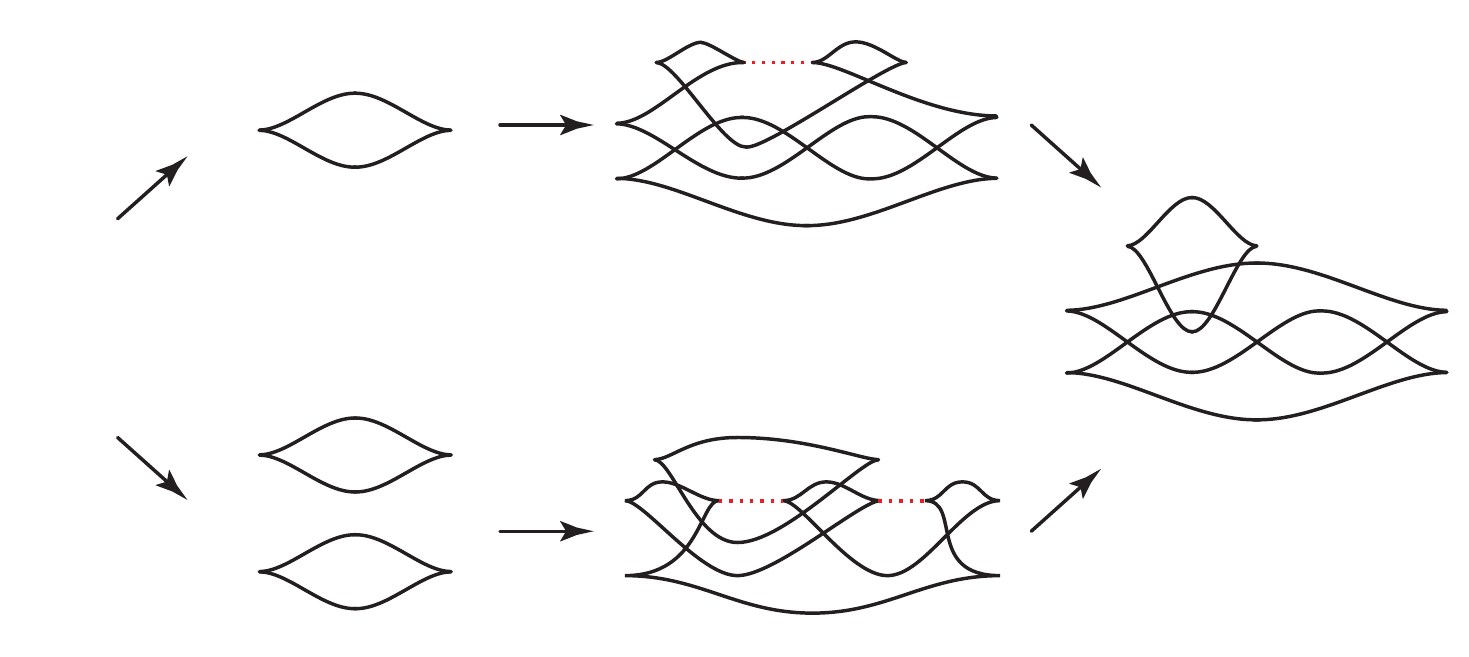}}
		\caption{The link $\leg^*$ on the right has two topologically distinct Lagranian fillings:  the top filling is an annulus, while the bottom is the union of a disk and a punctured torus.  } 
		\label{fig:first example}
	\end{figure}

\begin{description}	
\item[Construction A] Create a $0$-handle and obtain $\leg_1$ via Legendrian isotopy.  Attach a $1$-handle to obtain $\leg^*$. Notice that since there is but one $0$-handle, the filling created is a connected surface --- in particular, it is an annulus. 

\item[Construction B] Create two $0$-handles and obtain $\leg_2$ via Legendrian isotopy. Then add two $1$-handles to obtain $\leg^*$.  Since  no $1$-handle is attached between the two link components, the filling created is a disconnected surface; the top component of $\leg^{*}$ bounds a disk, while the bottom bounds a punctured torus. 
\end{description}

\subsection{The Tangle Replacement Construction}
\label{ssec:entangle}
	
The polyfillable link $\leg^*$ discussed above may be viewed as an entanglement of the unknot and the trefoil. The manner in which these link components are entangled may be generalized through the following tangle replacement procedure.

\begin{figure} 
	\labellist
		\pinlabel $\Pi_-$ [b] at 81 3
		\pinlabel $\Pi$ [b] at 225 77
		\pinlabel $\Pi_+$ [b] at 370 3
		\tiny
		\pinlabel $\leg_0$ [r] at 168 179
		\pinlabel $\leg_1$ [r] at 168 138
		\pinlabel $\leg_0^+$ [r] at 312 107		
		\pinlabel $\leg_1^+$ [r] at 312 62
		\pinlabel $\leg_0^-$ [r] at 24 107
		\pinlabel $\leg_1^-$ [r] at 24 62
		\endlabellist
		\centerline{\includegraphics[width=5in]{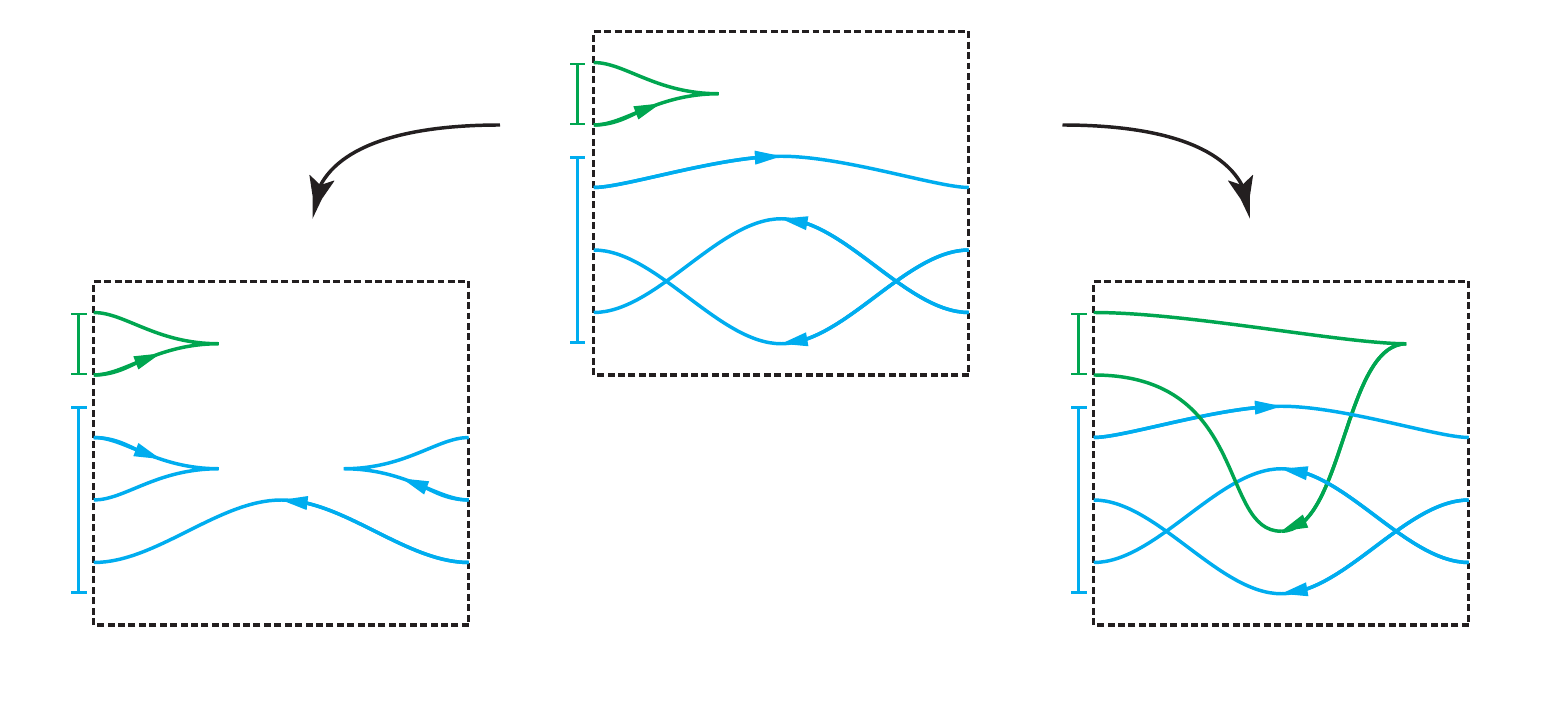} }
		\caption{The tangles $\Pi$, $\Pi_-$, and $\Pi_+$. } 
		\label{fig:pi-tangles}
	\end{figure} 	

\begin{defn} \label{defn:pi-tangle}
If a Legendrian link $\leg$ contains the tangle in the center of Figure~\ref{fig:pi-tangles} with the top two strands belonging to a component $\leg_0$ and the bottom three strands belonging to a sublink $\leg_1$ that does not contain $\leg_0$, then we say that $\leg$ \dfn{contains a $\Pi$ tangle}. 
Denote the tangle replacements on the left and right sides of Figure~\ref{fig:pi-tangles} by $\leg^-_{\Pi}$ and $\leg^+_{\Pi}$, respectively.
\end{defn}

\begin{figure}
		\labellist
		\tiny
		\pinlabel $\leg_0^+$ [r] at 382 172		
		\pinlabel $\leg_1^+$ [r] at 382 127
		\pinlabel $\leg_0^-$ [r] at 16 172
		\pinlabel $\leg_1^-$ [r] at 16 127
		\endlabellist
		\centerline{\includegraphics[width=\linewidth]{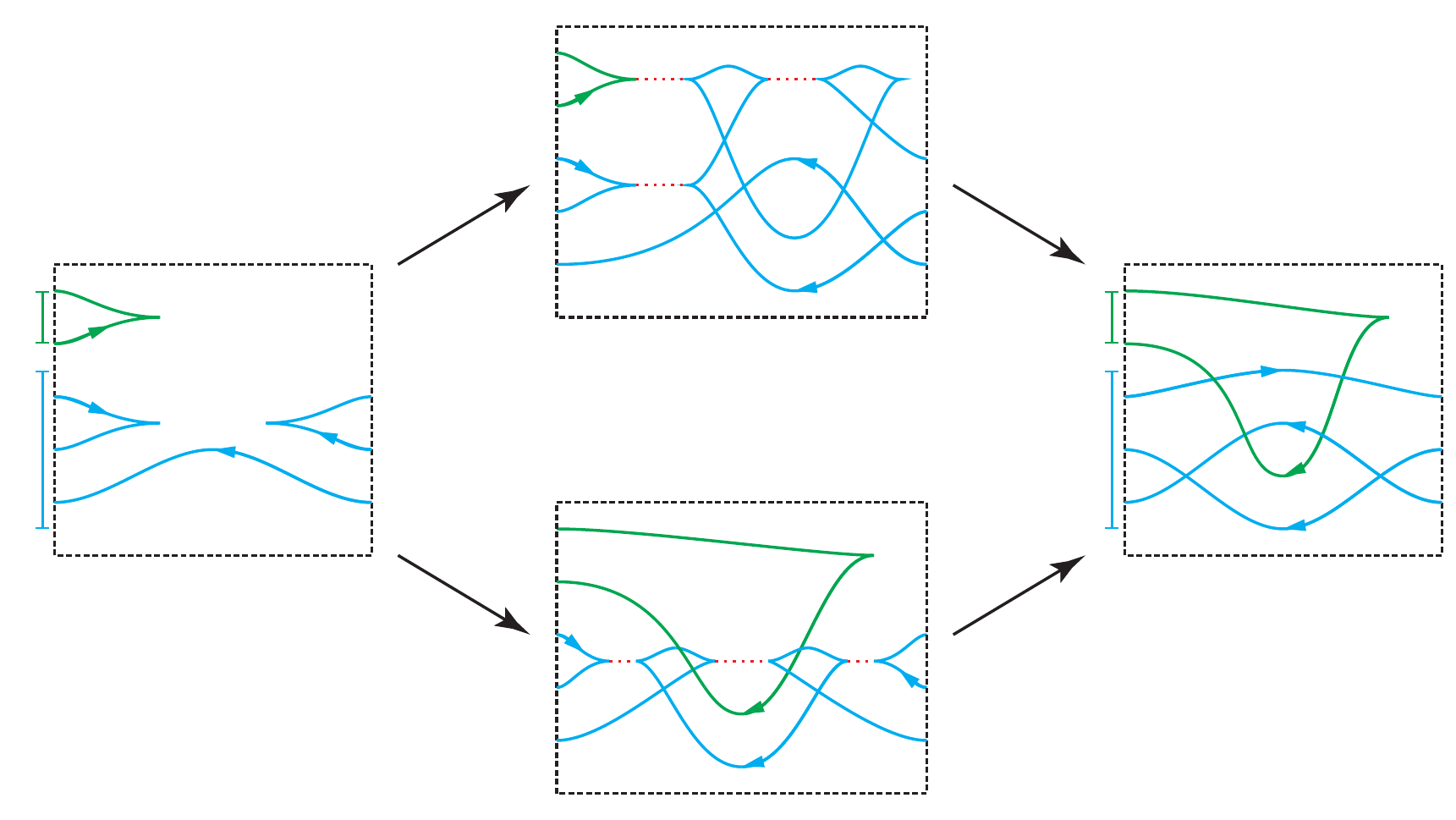}}
		\caption{The upper sequence defines the join construction and the lower sequence defines the split construction. In each case, Legendrian isotopy is implied between the first and second diagrams, and dotted horizontal bars indicate 1-handle attachments. } 
		\label{fig:local cobordisms}
\end{figure}

The top and bottom sequences in Figure~\ref{fig:local cobordisms} define topologically distinct cobordisms between $\leg^-_{\Pi}$ and $\leg^+_{\Pi}$. In the upper sequence, the \textbf{join construction}, $\leg^-_0$ and $\leg^-_{1}$ are merged by attaching a Lagrangian $1$-handle. It follows that $\leg^+_0$ and $\leg^+_1$ are contained in the boundary of a single connected component of the cobordism. In the lower sequence, the \textbf{split construction}, all 1-handles are attached within $\leg^-_1$. Thus, $\leg^+_0$ and $\leg^+_1$ belong to the boundaries of disjoint components.

The following theorem recasts the above discussion in a computationally useful form, using Equation~(\ref{eq:tb-euler}) to compute the changes in the Euler characteristics of the fillings.

\begin{thm} \label{thm: big theorem} Suppose the Legendrian link $\leg$ contains a $\Pi$ tangle involving the sublinks $\leg_0$ and $\leg_1$. If $\leg^-_{\Pi}$ has a Lagrangian filling $L^-$ with  $\leg^-_0$ and $\leg^-_{1}$ lying on the boundaries of different connected components $L^-_0$ and $L^-_1$ of $L^-$, then $\leg^+_{\Pi}$ has two topologically distinct fillings characterized as follows:
\begin{enumerate}
\item The join construction yields a filling $L^+$ of $\leg^+_{\Pi}$ with one fewer connected component than $L^-$.  The Euler characteristic of the component arising from the merging of two previously disconnected components is $\chi(L_0^-) + \chi(L^-_1) - 3$.
\item The split construction yields a filling $L^+$ of $\leg^+_{\Pi}$ with the same number of connected components as $L^-$.  The component of $L^+$ extending $L^-_0$ has the same Euler characteristic as $L^-_0$, while the component that extends $L^-_1$ has Euler characteristic $\chi(L_1^-) - 3$.
\end{enumerate}
\end{thm}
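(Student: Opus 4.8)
The plan is to read the two local cobordisms of Figure~\ref{fig:local cobordisms} as sequences of the elementary moves of Theorem~\ref{thm:construct}, glue each onto the given filling $L^-$, and then carefully track connected components and Euler characteristics. First I would check that both the join and the split sequences consist only of a Legendrian isotopy followed by $1$-handle attachments; Theorem~\ref{thm:construct} then produces in each case an exact oriented Lagrangian cobordism $\leg^-_\Pi \prec_L \leg^+_\Pi$. Concatenating such a cobordism with $L^-$ (a collared cobordism glued to a collared filling is again a collared filling) yields an exact orientable Lagrangian filling $L^+$ of $\leg^+_\Pi$ in each case.

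Next I would analyze the connected components. Since the tangle replacement is supported in a ball meeting only $\leg_0$ and $\leg_1$, every component of $L^-$ other than $L^-_0$ and $L^-_1$ is carried along unchanged in both constructions, so it suffices to examine the piece built on $L^-_0 \cup L^-_1$. In the join construction one $1$-handle is attached with feet on $\leg^-_0$ and $\leg^-_1$, fusing these into a single component of $L^+$, while the remaining handles lie within this fused piece and leave it connected; hence $L^+$ has exactly one fewer component than $L^-$. In the split construction all handle feet lie on $\leg^-_1$, so the component extending $L^-_0$ never interacts with $\leg^-_1$ and the component count is preserved.

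For the Euler characteristics I would use that a Legendrian isotopy leaves $\chi$ unchanged while each $1$-handle attachment lowers $\chi$ by $1$ (equivalently raises $tb$ by $1$, consistent with Equation~(\ref{eq:tb-euler})). Reading Figure~\ref{fig:local cobordisms}, each construction uses exactly three $1$-handles. In the join case all three lie on the fused component, so its Euler characteristic is $\chi(L^-_0)+\chi(L^-_1)-3$; in the split case all three lie on the extension of $L^-_1$, giving it $\chi(L^-_1)-3$, while the extension of $L^-_0$ is obtained by isotopy alone and retains $\chi(L^-_0)$. As a consistency check, Equation~(\ref{eq:tb-euler}) forces \emph{any} filling of $\leg^+_\Pi$ to have total Euler characteristic $-tb(\leg^+_\Pi)=\chi(L^-)-3$, which both constructions indeed realize. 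Finally, because the join filling has one fewer connected component than the split filling, the two are not homeomorphic.

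I expect the main obstacle to be the combinatorial bookkeeping attached to Figure~\ref{fig:local cobordisms}: verifying that each sequence really decomposes, after the initial isotopy, into precisely three $1$-handle attachments; that in the join exactly one of these handles bridges the $\leg_0$ and $\leg_1$ sides, so that the merge occurs and connectivity is as claimed; and that in the split every handle foot remains on $\leg^-_1$, so that the component extending $L^-_0$ is genuinely untouched. Once the handle structure is pinned down from the figure, the component count and the Euler characteristic formulas follow mechanically from the two principles above.
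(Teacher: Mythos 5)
Your proposal is correct and follows essentially the same route as the paper, which likewise reads the two sequences in Figure~\ref{fig:local cobordisms} as a Legendrian isotopy followed by $1$-handle attachments (via Theorem~\ref{thm:construct}), glues them onto $L^-$, and distinguishes the two resulting fillings by their component counts exactly as you do. The only cosmetic difference is that the paper computes the Euler characteristics by applying Chantraine's relation, Equation~(\ref{eq:tb-euler}), to the boundary of each component, whereas you count the three $1$-handle attachments directly and use Equation~(\ref{eq:tb-euler}) only as a consistency check; these are equivalent pieces of bookkeeping.
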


\subsection{An Infinite Family of Polyfillable Links}
\label{ssec:inf-fam}

We are now ready to construct a family of polyfillable links; Theorem \ref{thm:leg-polyfillable} will be a direct consequence.  The family will be indexed by compositions of an integer $N$, i.e.\  positive integer vectors  $X=(x_1, \ldots, x_n)$ such that $\sum_{i=1}^n x_i = N$.  We say that $n$ is the length of $X$.  Following \cite[p.\ 18]{stanley}, the set of compositions $C(N)$ of $N$ is in bijective correspondence with the power set of $\{1, \ldots, N-1\}$, which can be seen as follows:  represent $N$ as a line of $N$ dots, and represent a length $k$ composition of $N$ as a set of $k-1$ vertical bars that separate the dots into sets whose size is determined by the composition.  For example, the composition $(2,1,3)$ of $6$ looks like the following:
$$\begin{array}{cc|c|ccc}
	\bullet &\bullet & \bullet & \bullet& \bullet& \bullet
\end{array}$$
Thus, the set $C(N)$ inherits a natural partial order $\leq$ from the partial order on the power set given by inclusion.  From the perspective of dot-and-bar diagrams, we have $Y \leq X$ if the dot-and-bar diagram of $Y$ can be obtained from that of $X$ by removing zero or more bars. Numerically, $Y$ is the result of adding together disjoint strings of adjacent entries in $X$. Note that taking the set of compositions of $N$ modulo permutation of the components yields the set of partitions of $N$.

With this notation in hand, we may describe the family of polyfillable links and their fillings.

\begin{thm} \label{thm:general case}
Given a composition $X$ of an integer $N > 1$, where  $X$ has length $n>1$, there exists a polyfillable Legendrian link $\leg_X$ such that for any composition $Y \leq X$, there exists a Lagrangian filling $L_Y$ whose $i^{th}$ component has Euler characteristic $-y_i$.
\end{thm}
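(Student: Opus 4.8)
The plan is to realize each composition $Y \leq X$ by a distinct sequence of the three moves of Theorem~\ref{thm:construct}, all terminating at a single front $\leg_X$, and to use the dot-and-bar description of the partial order to organize these sequences. Concretely, I would build $\leg_X$ out of $n$ local ``gadgets'' $G_1, \dots, G_n$ strung together by $n-1$ copies of the $\Pi$ tangle of Definition~\ref{defn:pi-tangle}, one copy straddling each of the $n-1$ possible bars in the dot-and-bar diagram of $X$. Each gadget $G_i$ would be a torus-link-type piece as in Example~\ref{ex:torus-link}, tuned so that, filled on its own, it contributes a connected surface whose Euler characteristic is controlled by $x_i$. The guiding principle is the bijection recalled just before the statement: a composition $Y \leq X$ is exactly a choice, for each of the $n-1$ bars, of whether to keep it or remove it, and I would match ``keep the $k$-th bar'' with the split construction at the $k$-th $\Pi$ tangle and ``remove the $k$-th bar'' with the join construction there.

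With this dictionary in place, the second step is to produce, for each $Y$, the filling $L_Y$ by resolving every $\Pi$ tangle according to the pattern of kept and removed bars and then reading off the connected components. Applying the join construction at the removed bars fuses the gadgets lying in a common block of $Y$ into a single component of $L_Y$, while the split construction at the kept bars leaves the blocks on disjoint components; thus $L_Y$ has exactly one component per part of $Y$, as required. The Euler characteristics are then computed by Theorem~\ref{thm: big theorem} together with Equation~(\ref{eq:tb-euler}): each join contributes its $-3$ to the merged component, and I would choose the intrinsic Euler characteristics of the gadgets so that, after these corrections are summed over a block, the $i$-th component lands on exactly $-y_i$. Since $tb(\leg_X)$ is fixed, every $L_Y$ automatically has total Euler characteristic $-N$, a useful consistency check throughout.

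The hard part will be carrying out this bookkeeping coherently across all $2^{n-1}$ fillings at once. Two constraints must hold simultaneously at every application of Theorem~\ref{thm: big theorem}: first, its hypothesis that the relevant sublinks $\leg_0$ and $\leg_1$ lie on distinct connected components of the filling built so far, which forces a careful order in which the tangles are resolved and a careful choice of which strands of each gadget play the roles of $\leg_0$ and $\leg_1$; and second, the requirement that every connected surface appearing en route be an honest filling, i.e.\ have Euler characteristic at most $1$. The delicate interaction is precisely the $-3$ shift: because a split lowers the Euler characteristic of its $\leg_1$ side by $3$, a naive layout demands intermediate pieces of Euler characteristic $2$, which is impossible for a connected surface with boundary. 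I expect the crux to be an engineering of the gadgets and of the $\leg_0/\leg_1$ assignments --- routing each small part, in particular every part equal to $1$, onto the unchanged $\leg_0$ side of its splits, and pre-loading the remaining gadgets with enough genus to absorb the accumulated $-3$'s --- so that all of these inequalities hold together. This is exactly the regime needed for the specialization $X = (1, \dots, 1)$ that yields Theorem~\ref{thm:leg-polyfillable}, so it cannot be sidestepped.

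Once the fillings $L_Y$ are in hand with the prescribed component Euler characteristics $-y_i$, the existence claim of the statement is proved; the genuine distinctness of the resulting fillings, detected by the first Betti numbers of their components, is only needed downstream and can be deferred.
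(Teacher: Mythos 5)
Your construction of $\leg_X$ --- torus-link gadgets chained together by $n-1$ copies of the $\Pi$ tangle, with ``keep the $k$-th bar'' matched to the split construction and ``remove it'' matched to the join --- is exactly the paper's, as is the identification of the components of $L_Y$ with the blocks of $Y$. The gap is the bookkeeping you explicitly defer, and within the framework you set up it is not removable. You propose to fix an intrinsic filling for each gadget and then accumulate the $-3$'s of Theorem~\ref{thm: big theorem}. Count what that requires: the corrections total $-3(n-1)$, so the intrinsic Euler characteristics must sum to $-N+3(n-1)$, while $n$ connected surfaces with boundary contribute at most $n$; worse, any \emph{singleton} block that absorbs even one correction needs a connected intermediate piece with $\chi = 3-y_i$, which for $y_i=1$ equals $2$ --- impossible, as you yourself note. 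Your proposed repair (route every part equal to $1$ onto the $\leg_0$ side of its splits) cannot be carried out: in the chain the roles are fixed by Definition~\ref{defn:pi-tangle} (the $\leg_1$ of the $i$-th tangle is the $(i+1)$-st gadget), so when every bar is kept, gadgets $2,\dots,n$ all take a $-3$; and no reassignment of roles can help, since the $n-1$ corrections must land on some gadgets, and in the case $X=Y=(1,\dots,1)$ \emph{every} part equals $1$, so every hit is fatal. Thus precisely the case you correctly say cannot be sidestepped --- the one needed for Theorem~\ref{thm:leg-polyfillable} --- is unreachable. Running your own consistency check (total $\chi=-N$) would have exposed the mismatch immediately.

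The paper removes the problem rather than solving it, via two ingredients missing from your outline. First, the $\Pi_-$ replacement \emph{changes the knot types of the gadgets}: the component that appears in $\leg_X$ as a $(2,x_i+2)$-torus knot appears in the $\Pi_-$-replaced link as a $(2,x_i)$-torus link --- an unknot, filled by a disk, when $x_i=1$ --- so the starting split link consists of unknots and small torus links, each fillable by Example~\ref{ex:torus-link}, and the tangle constructions then restore the missing clasps. Second, and decisively, the paper never sums per-tangle corrections at all: Theorem~\ref{thm: big theorem} is invoked only to count the connected components of $L_Y$ and to see which boundary knots lie on which component. The Euler characteristic of each component is then computed in one stroke from Chantraine's relation, Equation~(\ref{eq:tb-euler}), applied to that single component: its boundary is the sublink of $(2,x_m+2)$-torus knots in one block, whose mutual crossings contribute zero net writhe, so its Thurston--Bennequin number is $\sum_m x_m = y_j$ and its Euler characteristic is $-y_j$ on the nose. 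There are no inequalities to engineer and no intermediate-surface constraints to satisfy; indeed, this Chantraine computation forces the net per-tangle drop in this construction to be $2$ rather than $3$, which is why your accounting --- a flat $-3$ on top of gadgets already realizing $-x_i$ --- could never close. If you replace the correction-tracking by this direct per-component application of Equation~(\ref{eq:tb-euler}), your outline becomes the paper's proof.
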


\begin{figure}
\begin{center}
		\labellist
		\small
		\pinlabel $x_3$ [B] at 109 33
		\pinlabel $x_2$ [B] at 82 96
		\pinlabel $x_1$ [B] at 55 160
		\pinlabel $x_1$ [B] at 271 160
		\pinlabel $x_2$ [B] at 298 96
		\pinlabel $x_3$ [B] at 325 33
		\endlabellist
\includegraphics[width=\linewidth]{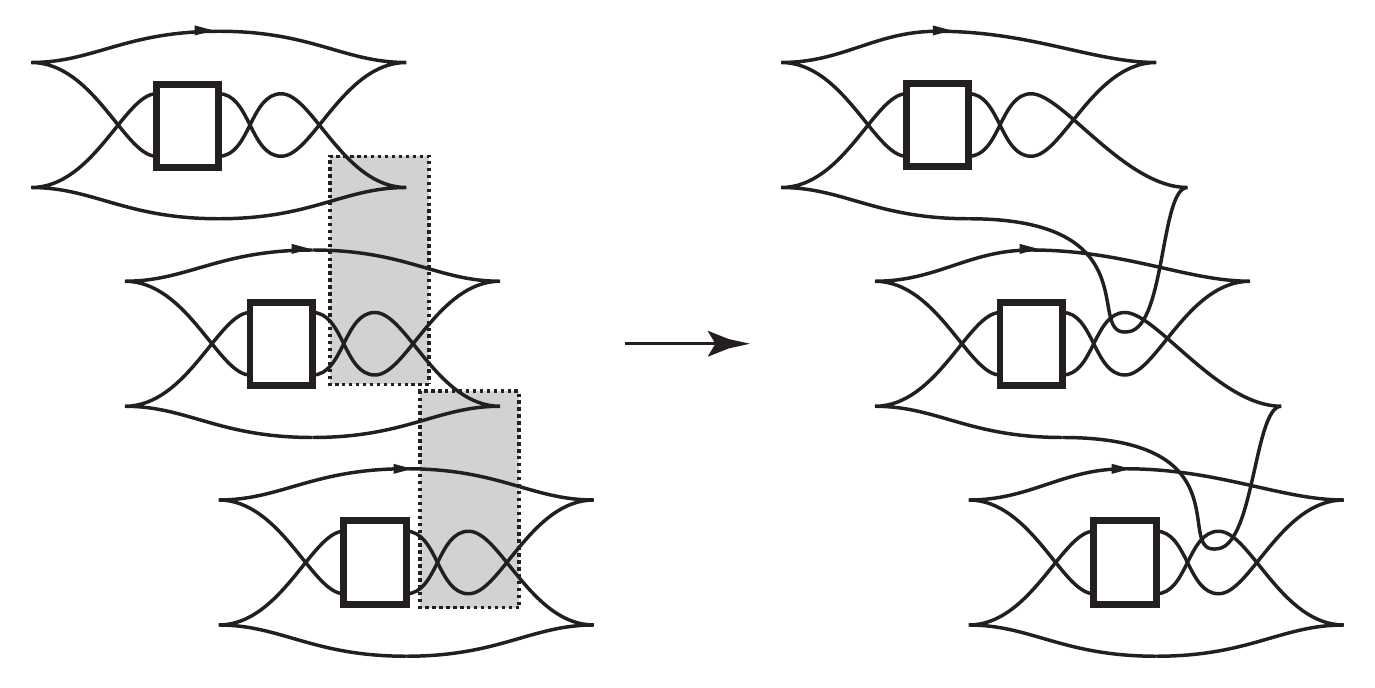}
\caption{The split link $\leg^s$ corresponding to a length-$3$ composition and its conversion to $\leg_X$ via several $\Pi$ tangle replacements.}
\label{fig:N-trefoil}
\end{center}
\end{figure}

\begin{proof} \label{prf:short}
Consider the split link $\leg^s=\bigsqcup_{i = 1}^n \leg_i$ depicted in Figure \ref{fig:N-trefoil}, where $\leg_i$ is a $(2,x_i+2)$-torus link.  Notice that the bottom-right cusp of $\leg_i$ and the two rightmost crossings of $\leg_{i+1}$ form a $\Pi$ tangle, giving rise to the $n-1$ $\Pi$ tangles shown in Figure \ref{fig:N-trefoil}. 

To create $\leg_X$, simply replace these $n-1$ $\Pi$ tangles by $\Pi_+$. To understand the fillings $L_Y$, suppose that $Y \leq X$; that is, suppose that 
\begin{equation*}
Y = (x_1 + \cdots + x_{i_1}, x_{i_1+1} + \cdots + x_{i_2}, \ldots, x_{i_k+1} + \cdots + x_n).
\end{equation*}

To create $L_Y$, first replace the aforementioned $\Pi$ tangles of $\leg$ by $\Pi_-$.  The resulting link is a split link whose components are unknots and $(2,m)$-torus knots, all of which are individually fillable by Example~\ref{ex:torus-link}.  Complete $L_Y$ by performing the split construction on the $\Pi$ tangles between the sublinks $\leg_{i_j}$ and $\leg_{i_j+1}$, $j=1, \ldots, k$, and the join construction on the $\Pi$ tangles between the other sublinks.   

To verify that we have, indeed, constructed the filling $L_Y$ described in the statement of the theorem, first note that Theorem~\ref{thm: big theorem}, used inductively, tells us that the resulting filling has $k+1$ components.  Further, the Euler characteristic of the $j^{th}$ component may be computed using Equation (\ref{eq:tb-euler}), which equates the Euler characteristic of each component to the negative of the Thurston-Bennequin invariant of its boundary.  Since the boundary of the $j^{th}$ component consists of $(2,x_j+2)$-torus knots linked together, and since one may easily see that the crossings between two knots contribute zero to the writhe of the diagram, we may conclude that the Euler characteristic of the $j^{th}$ component of $L_Y$ is $-\sum_{m=i_j+1}^{i_{j+1}} x_m$, as required.
\end{proof}

To derive Theorem~\ref{thm:leg-polyfillable} from Theorem~\ref{thm:general case}, simply consider the composition $X = (1,1,\ldots, 1)$ of $N$.  Note that the link $\leg_X$ in this case is necessarily composed of trefoil components.  The composition $X$ is maximal with respect to the partial order, and hence the number of topologically distinct fillings of $\leg_X$ is equal to the number of \emph{partitions} of $N$, as the topological type of the filling is insensitive to order.

Looking at the increasing sequence
\begin{equation*}
	(N) \leq (1,N-1) \leq (1,1,N-2) \leq \cdots \leq (1,1, \ldots, 1)
\end{equation*}
of compositions of $N$, we obtain the following corollary:

\begin{cor} \label{cor:polyfillable-diff-num-comp}
For every positive integer $N$, there exists an $N$-component Legendrian link $\leg_N \subset (S^3, \xi_0)$ such that for any $1 \leq k \leq N$, there exists a Lagrangian filling $L_{k,N} \subset (B^4, \omega_0)$ of $\leg_N$ with $k$ connected components.
\end{cor}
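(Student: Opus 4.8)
The plan is to derive the corollary directly from Theorem~\ref{thm:general case} by specializing to the all-ones composition. The degenerate case $N=1$ is handled separately: the statement then only asks for a one-component Legendrian link with a connected filling, which is supplied by a Legendrian unknot bounding a Lagrangian disk (a single $0$-handle). So assume $N \geq 2$ and set $X = (1,1,\ldots,1)$, the length-$N$ composition of $N$. Take $\leg_N := \leg_X$ from Theorem~\ref{thm:general case}; since $X$ has length $N$, this is an $N$-component link (each component a trefoil, as remarked after that theorem), exactly as required. The essential point is that $\leg_N$ is fixed once and for all, and the various $L_{k,N}$ will be distinct fillings of this single link.

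The core of the argument is to exhibit, for each $1 \leq k \leq N$, a composition $Y \leq X$ of length exactly $k$. I would take
$$Y_k = (\underbrace{1,\ldots,1}_{k-1},\, N-k+1),$$
which is a genuine composition of $N$ since $N-k+1 \geq 1$ for $k \leq N$ and all entries are positive. In the dot-and-bar picture $X$ has all $N-1$ bars present, whereas $Y_k$ retains only the first $k-1$ of them; since passing to $Y_k$ removes bars, we have $Y_k \leq X$. These are precisely the members of the chain $(N) \leq (1,N-1) \leq \cdots \leq (1,\ldots,1)$ displayed before the corollary.

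Applying Theorem~\ref{thm:general case} to each $Y_k$ then produces a filling $L_{Y_k}$ whose $i^{th}$ component has Euler characteristic $-(Y_k)_i$; as this provides one component for each of the $k$ entries of $Y_k$, the filling has exactly $k$ connected components, and it lies in $(B^4,\omega_0)$ by the identification of symplectization fillings with fillings of $S^3$ discussed in Section~\ref{ssec:fill-background}. Setting $L_{k,N} := L_{Y_k}$ finishes the construction. I do not expect a genuine obstacle here, since the corollary is a pure specialization of the theorem; the only content beyond quoting it is the elementary poset verification above, together with the observation that the hypothesis $n > 1$ in Theorem~\ref{thm:general case} constrains $X$ rather than $Y$, so the choice $Y_1 = (N)$ of length $1$ is still admissible. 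The one point worth stating carefully is that the phrase ``the $i^{th}$ component has Euler characteristic $-y_i$'' is to be read as asserting that the number of connected components of $L_Y$ equals the length of $Y$, which is what makes the count of components come out to $k$.
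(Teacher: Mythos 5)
Your proposal is correct and follows exactly the paper's own route: the paper derives the corollary by taking $X=(1,1,\ldots,1)$ and invoking Theorem~\ref{thm:general case} along the chain $(N) \leq (1,N-1) \leq (1,1,N-2) \leq \cdots \leq (1,\ldots,1)$, which is precisely your family $Y_k$. Your explicit treatment of the degenerate case $N=1$ (unknot with disk filling) and your observation that the hypothesis $n>1$ constrains $X$ rather than $Y$ are small tidying touches the paper leaves implicit, but the argument is the same.
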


\section{Applications of Theorem \ref{thm:leg-polyfillable}}
\label{sec:applications}

The goal of this section is to derive Theorems
\ref{thm:alg-polyfillable},  \ref{thm:cont-polyfillable}, and \ref{thm:high-d-leg-polyfillable} from Theorem~\ref{thm:leg-polyfillable}.

\subsection{Symplectic Fillings of Transverse Links}
\label{ssec:transverse}

The first step in the proofs of Theorems \ref{thm:alg-polyfillable} and \ref{thm:cont-polyfillable} is to $C^\infty$ approximate a Lagrangian filling of a Legendrian link by a symplectic filling of a transverse link.

\begin{lem} \label{lem:perturb}
  Let $(X,L, \omega)$ be an oriented strong Lagrangian filling of $(Y,\leg, \alpha)$.  The Lagrangian surface $L$ may be $C^\infty$ approximated by a symplectic surface $L^+$ that satisfies:
  \begin{enumerate}
  \item $\omega|_{L^+} > 0$ and
  \item $\partial L^+$ is a positive transverse link smoothly isotopic to $\leg$.
  \end{enumerate}
\end{lem}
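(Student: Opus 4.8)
The plan is to perturb $L$ locally near $\partial L$ so that the boundary leaves the contact manifold transversally (yielding a positive transverse link) and so that the perturbed surface becomes symplectic, while keeping the smooth isotopy type of the boundary fixed. First I would exploit the collar structure guaranteed by the strong filling hypothesis: there is a neighborhood of $Y=\partial X$ symplectomorphic to $((-a,0]\times Y, d(e^t\alpha))$, and within this collar the Lagrangian filling is collared, agreeing with $(-a,0]\times\leg$. The key local model is thus the symplectization $(\rr\times Y, d(e^t\alpha))$ near a Legendrian $\leg$, where $(-a,0]\times\leg$ is Lagrangian. Away from the collar, I would leave $L$ untouched; all of the work happens in an arbitrarily thin neighborhood of the boundary.

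The core computation is local and linear-algebraic. Along $\leg$, the symplectization has a natural splitting of the contact hyperplane field into a Lagrangian and a "transverse" direction, and the Lagrangian cylinder $(-a,0]\times\leg$ sits in the locus where $\omega$ vanishes. The idea is to tilt the cylinder: replace the straight product $(-a,0]\times\leg$ by a surface that spirals slightly in the Reeb direction, i.e.\ push the boundary off the Legendrian so that the tangent plane to the surface acquires a positive component of $\omega$. Concretely, parametrizing $\leg$ by arclength $s$ and writing the surface as a graph over the $(t,s)$ parameters into the remaining directions, I would add a term of the form $\varepsilon f(t,s)$ in the Reeb direction, chosen so that (a) near the inner end of the collar the perturbation vanishes and the surface matches the unperturbed $L$, and (b) at the boundary $t=0$ the surface pushes $\leg$ to a positive transverse pushoff. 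Computing $\omega$ restricted to the tilted tangent plane, the dominant new contribution is positive and proportional to $\varepsilon$ times the derivative of the Reeb displacement, so for appropriate monotone $f$ one gets $\omega|_{L^+}>0$ throughout the perturbed collar. Outside the collar $\omega|_L=0$ already (Lagrangian), but there I would need an additional small perturbation to make the inequality strict; this can be arranged by a generic $C^\infty$-small push that is compactly supported in the interior, using that strict positivity is an open condition and that two-forms can be made nondegenerate on a surface by arbitrarily small perturbations.

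For the boundary condition, I would recall that a $C^\infty$-small pushoff of a Legendrian knot in the positive Reeb direction is a positive transverse knot smoothly isotopic (indeed Legendrian-to-transverse isotopic in the standard way) to the original, so condition~(2) follows from the construction at $t=0$ together with the standard relationship between Legendrian knots and their transverse pushoffs. Since the perturbation is $C^\infty$-small and supported near $\leg$, the resulting $\partial L^+$ is smoothly isotopic to $\leg$, and the whole surface $L^+$ is $C^\infty$-close to $L$.

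The main obstacle I anticipate is reconciling the two regimes: achieving strict positivity $\omega|_{L^+}>0$ everywhere, including at the inner gluing region of the collar where the tilting perturbation must taper to zero to match the unmodified Lagrangian. Near that seam the new positive contribution degenerates precisely where the original form vanishes, so one cannot rely solely on the Reeb tilt there. The careful part of the argument is therefore to interpolate: choose the profile function $f$ and an auxiliary interior perturbation so that the positive term from the tilt and the positive term from the generic interior push overlap and never simultaneously vanish, keeping $\omega|_{L^+}$ strictly positive across the transition. This is a delicate but routine cut-off-and-interpolate construction once the two local positivity computations are in hand.
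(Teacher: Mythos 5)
Your boundary construction is essentially the paper's: the paper works in the local model $(0,1]\times S^1\times\rr^2$ with $\omega = d\bigl(e^s(\cos\theta\,dx-\sin\theta\,dy)\bigr)$ and replaces the Lagrangian collar $(s,t)\mapsto(s,t,0,0)$ by the spiraling surface $\phi_+(s,t)=(s,t,\sigma(s)\sin t,\sigma(s)\cos t)$, computing $\phi_+^*\omega = e^s(\sigma(s)+\sigma'(s))\,ds\wedge dt\geq 0$; this is exactly your ``tilt the collar so the tangent planes pick up positive $\omega$ and the boundary becomes a positive transverse pushoff.'' So that part of your proposal is sound.

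The genuine gap is your interior step. You propose to achieve $\omega|_{L^+}>0$ away from the collar ``by a generic $C^\infty$-small push that is compactly supported in the interior,'' on the grounds that positivity is open and two-forms can be made nondegenerate by small perturbations. This cannot work, and not merely for genericity reasons: if $L''$ agrees with $L'$ outside a compact set $K$ in the interior, then $L''$ and $L'$ are homologous rel $\partial K$, and since $\omega$ is closed, $\int_{L''\cap K}\omega = \int_{L'\cap K}\omega = 0$ on the (still Lagrangian) interior piece. Hence $\omega$ restricted to the perturbed interior must change sign or vanish somewhere --- no compactly supported interior perturbation, generic or not, can make it strictly positive. (Equivalently, in a Weinstein neighborhood a $C^1$-small perturbation is the graph of a $1$-form $\beta$ and $\omega$ restricts to it as $d\beta$, so strict positivity requires a primitive $\beta$ with $d\beta$ a positive area form; by Stokes this forces $\beta$ to be nontrivial along the boundary of the perturbed region, i.e.\ the positivity must be fed in from the boundary, which is precisely the cohomological mechanism your genericity argument ignores.) This also shows your ``seam'' interpolation is not a routine cut-off issue: positivity has to be propagated from the collar across the entire interior. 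The paper handles all of this by invoking Eliashberg's Lemma 2.3.A \cite{yasha:lagr-cyl}, which says it suffices to perturb $L$ in a collar of $\partial L$ so that the boundary becomes transverse and $\omega$ is positive near it; the nontrivial interior approximation is Eliashberg's theorem, not a genericity statement. To repair your argument, either cite that lemma as the paper does, or replace the generic push by a global graph-of-a-primitive construction (choose $\beta$ with $d\beta=\epsilon\,dA$, which exists because every component of $L$ has boundary, and match it to the collar tilt).
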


\begin{proof}
	By Lemma 2.3.A in Eliashberg's paper \cite{yasha:lagr-cyl}, it suffices to perturb $L$ in a collar neighborhood of $\partial L$ so that the boundary of the resulting surface is a transverse link and so that $\omega$ is positive on a neighborhood of the boundary of the perturbed surface.  

	We work in local coordinates in a collar neighborhood $(0,1] \times S^1 \times \rr^2$ of the boundary of $L$, where the coordinates are $(s,\theta,x,y)$ and the symplectic form is $\omega = d\bigl(e^s (\cos \theta \,dx - \sin \theta\, dy) \bigr)$. The Lagrangian $L$ is the  image of the embedding
	\begin{align*}
	\phi: \rr \times S^1 &\to \rr \times S^1 \times \rr^2 \\
	(s,t) &\mapsto (s,t,0,0),
	\end{align*}
Orient $L$ by specifying that $\rr \times S^1$ is oriented by $ds \wedge dt$.

	To perturb $L$, we choose a smooth increasing function $\sigma: (0,1] \to \rr$ that is equal to $0$ near $0$ and is equal to $\epsilon>0$ near $1$.  Define the perturbed surface using the embedding $\phi_+$ given by the following formula:
	$$\phi_+(s,t) = (s, t, \sigma (s) \sin t, \sigma(s) \cos t).$$
	It is clear from the definition of $\sigma$ that this perturbation may be made as small as desired and can be extended to the identity outside of the given neighborhood, so it suffices to show that $\phi_+^* \omega$ is a non-negative multiple of $ds \wedge dt$.  This is a straightforward computation:
	\begin{equation*}
		\phi_+^* \omega =  e^s( \sigma(s) + \sigma'(s) ) ds \wedge dt.
	\end{equation*}
	Since $e^s( \sigma(s) + \sigma'(s) ) \geq 0$, the conclusion follows.
	\end{proof}

\subsection{Algebraically Polyfillable Links}
\label{ssec:alg-polyfillable}

In this subsection, we prove Theorem~\ref{thm:alg-polyfillable}, namely that for every  integer $N>1$, there exists a link $K \subset S^{3}$ with $p(N)$ non-singular complex algebraic curves that intersect $S^3$ transversally in $K$ with the property that the intersections of the curves with $B^4$ are pairwise non-homeomorphic. Let $\leg$ be the Legendrian link constructed for Theorem~\ref{thm:leg-polyfillable} and let $L_{Y}$ be the Lagrangian filling corresponding to the composition $Y$ of $N$.  By Lemma~\ref{lem:perturb}, we may perturb $L_{Y}$ to a symplectic surface $L^{+}_{Y}$ in $B^{4}$ whose boundary is a positive transverse link $K$.  By Theorem 2 of \cite{bo:qp}, the surface $L^{+}_{Y}$ is quasi-positive.  Finally, Rudolph proved in \cite{rudolph:qp-alg} (as interpreted in \cite[\S4]{rudolph:braided-surface}) that every quasi-positive surface $S$ is isotopic to a smooth piece of an algebraic curve inside $B^4$ that intersects $S^3$ transversally in a link isotopic to $\partial S$.  This completes the proof.

\subsection{Polyfillable Contact $3$-Manifolds}
\label{ssec:cont-polyfillable}

We now pivot from fillings of links to fillings of contact $3$-manifolds.

Fix $N>1$ and let $L_{1}, \ldots, L_{N}$ be the Lagrangian fillings of the Legendrian link $\leg_N$ provided by Corollary~\ref{cor:polyfillable-diff-num-comp}. We will abuse notation and assume that the surfaces $L_{k}$ have already been perturbed, as in Lemma~\ref{lem:perturb}, to symplectic surfaces with transverse boundary.  Define $(X_{k}, \omega_{k})$ to be the double branched cover of $B^4$, branched over $L_{k}$, and let $(Y, \alpha)$ be the double branched cover of $S^3$, branched over $\leg_N$.  The construction of the symplectic form on the double branched cover may be found in \cite{gompf:sympl-asph}, for example. Since we know that the boundaries of the symplectic surfaces $L_{k}$ have collar neighborhoods and that $B^4$ has a collar neighborhood symplectomorphic to a portion of the symplectization of $S^3$, we know that the same is true for $X_{k}$ and $Y$.  It follows that $(X_{k}, \omega_{k})$ is a strong filling of $(Y, \alpha)$. Alternatively, as pointed out by Loi and Piergallini \cite[Theorem 3]{lp:stein}, the work in the previous subsection actually allows us to construct an analytic double branched cover over an algebraic curve; in this case, we see that $(X_{k}, \omega_{k})$ is, in fact, a Stein filling.

To finish the proof of Theorem~\ref{thm:cont-polyfillable}, then, we need only prove the following lemma:

\begin{lem} \label{lem:h3-calc}
  The rank of $H_3(X_{k}, Y)$ is $k-1$, but all $X_{k}$ have the same Euler characteristic.
\end{lem}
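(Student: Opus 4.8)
The plan is to compute both quantities directly from the topology of the branched cover, using the fact established in Corollary~\ref{cor:polyfillable-diff-num-comp} that the Lagrangian filling $L_k$ has exactly $k$ connected components while all $L_k$ fill the same link $\leg_N$ and hence have the same total Euler characteristic (each $L_k$ is built from the same link $\leg_N$, so $\chi(L_k)$ is determined by $\sum (-y_i) = -N$ independent of how $N$ is distributed across components). First I would record that $X_k$ is the double cover of $B^4$ branched over the properly embedded surface $L_k$, and that $Y$ is the double cover of $S^3 = \partial B^4$ branched over $\partial L_k = \leg_N$, a fixed link independent of $k$; thus the pair $(Y,\alpha)$ really is the same for every $k$.

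For the Euler characteristic claim, I would use the standard multiplicativity formula for a double branched cover: if $p \colon X_k \to B^4$ is branched over $L_k$, then
\begin{equation*}
  \chi(X_k) = 2\chi(B^4) - \chi(L_k).
\end{equation*}
Since $B^4$ is contractible, $\chi(B^4)=1$, and since $\chi(L_k) = -N$ is the same for all $k$ by the remark above, every $X_k$ has the same Euler characteristic $2+N$. This half is essentially immediate once the branched-cover Euler characteristic formula is invoked.

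For the rank of $H_3(X_k,Y)$, I would set up the long exact sequence of the pair $(X_k, Y)$ and identify the relevant groups. Since $X_k$ is a compact $4$-manifold with boundary $Y$, I expect $H_3(X_k,Y) \cong H^1(X_k)$ by Lefschetz--Poincar\'e duality, reducing the computation to the first cohomology (equivalently, $b_1$) of $X_k$. The key point is that $b_1(X_k)$ should detect exactly how many components $L_k$ has: intuitively, each extra connected component of the branch surface past the first contributes a new loop in the base that lifts nontrivially, producing an extra generator of $H_1(X_k)$. I would make this precise either by an explicit Mayer--Vietoris / transfer argument for double branched covers, or by relating $H_1(X_k)$ to the homology of the double cover of $B^4 \setminus L_k$ and computing the abelianization of the relevant fundamental group, which is governed by the linking data of the components. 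The expected answer is that the rank jumps by one for each additional component, giving $\rk H_3(X_k, Y) = k-1$ (so that $X_1$, the cover over a connected surface, has $b_1 = 0$).

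The main obstacle I anticipate is precisely this last identification: showing cleanly that the first Betti number of the double branched cover equals (number of branch components) $-1$. Naively one might fear $L_k$ is embedded in complicated ways, but the saving feature is that the components of $L_k$ arise from the \emph{disjoint-components} structure of the split/join construction, and disconnectedness of the branch locus is exactly what forces nontrivial $H_1$ upstairs. I would therefore argue that, because the branch surface $L_k$ has $k$ connected components each of which is connected and the ambient $B^4$ is simply connected, a $\zz/2$ Mayer--Vietoris computation (or a direct count via the half-lives-half-dies principle applied to the double cover) yields $b_1(X_k) = k-1$. The cleanest route is likely to invoke the known homological description of double branched covers of $B^4$ over a surface with $k$ components, where the first homology of the cover is controlled by $H_0$ of the branch locus modulo the single relation coming from the total branching class; this furnishes $\rk H_3(X_k,Y) = \rk H^1(X_k) = k-1$, completing the proof.
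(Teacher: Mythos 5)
Your Euler characteristic argument is correct and is in fact more direct than the paper's: the multiplicativity formula $\chi(X_k) = 2\chi(B^4) - \chi(L_k)$ for double branched covers, combined with Chantraine's theorem (Equation~(\ref{eq:tb-euler})), which forces $\chi(L_k) = -tb(\leg_N) = -N$ for \emph{every} filling, gives constancy immediately. (The paper instead pushes the Euler characteristic through the Lee--Weintraub long exact sequence, the triple $(B^4, L_k \cup S^3, S^3)$, and excision to show $\chi(X_k,Y)$ is constant; the two are equivalent since $Y$ is fixed.) The duality step $H_3(X_k,Y) \cong H^1(X_k)$ is also legitimate, since $X_k$ is a compact orientable $4$-manifold with boundary $Y$.

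The genuine gap is in the only hard part of the lemma: the claim $b_1(X_k) = k-1$. You assert this follows from a Mayer--Vietoris/transfer computation ``because the ambient $B^4$ is simply connected,'' or from a ``known homological description'' of double branched covers, but neither is substantiated, and the difficulty sits exactly there. Writing $X_k = \widetilde{E} \cup (L_k \times D^2)$, where $\widetilde{E}$ is the double cover of the exterior $E = B^4 \setminus \nu(L_k)$, Mayer--Vietoris gives $H_1(X_k;\qq) \cong H_1(\widetilde{E};\qq)/\langle \text{lifted meridians},\, \iota_*H_1(L_k)\rangle$, and a transfer argument shows the lifted meridians span precisely the invariant part of $H_1(\widetilde{E};\qq)$ under the covering involution. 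Hence $H_1(X_k;\qq)$ is a quotient of the \emph{anti-invariant} part of $H_1(\widetilde{E};\qq)$ --- an Alexander-module-type group that a priori depends on the embedding of $L_k$, not merely on its number of components. Simple connectivity of $B^4$ controls $H_1(E;\qq) \cong \qq^k$ but does not by itself bound the homology of the double cover $\widetilde{E}$; one must prove both that the $k-1$ ``meridian-difference'' classes survive in $H_1(X_k)$ and that no additional rational homology appears. This is exactly what the paper extracts from Lee--Weintraub: their Theorem 1 exact sequence exhibits $H_3(X_k,Y)$ as a quotient of $H_3(B^4, L_k\cup S^3) \cong H_2(L_k,\leg_N) \cong \zz^k$ (via the triple and excision), and their Theorem 2 (injectivity of $\partial_*\colon H_4(B^4,S^3) \to H_3(B^4,L_k\cup S^3)$) cuts the rank down by exactly one. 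Without this input or an equivalent argument, your $b_1$ computation is an assertion rather than a proof; note also that the appeal to ``half-lives-half-dies'' is misplaced, since that principle concerns the map $H_1(\partial X_k) \to H_1(X_k)$ rather than the quantity at issue.
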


\begin{proof}
  The key tool in this proof is the following long exact sequence, adapted from \cite[Theorem 1]{lee-weintraub}:\footnote{While the long exact sequence in \cite{lee-weintraub} is only discussed for $\zz_2$ coefficients, it is clear that if both the base and the branched cover are orientable, as they are here, the proof carries over to $\zz$ coefficients.}

\begin{equation} \label{eq:2x-les}
  \cdots \to H_*(B^4, L_{k} \cup S^3) \to H_*(X_{k},Y) \to  H_*(B^4, S^3) \to \cdots.
\end{equation}

The long exact sequence shows that the Euler characteristic $\chi(X_{k},Y)$ may be computed from the Euler characteristics $\chi(B^4, S^3)$ (which is simply $1$) and $\chi(B^4, L_{k} \cup S^3)$.  To compute the latter quantity, we use the long exact sequence of the triple $(B^4, L_{k} \cup S^3, S^3)$, which shows that $\chi(B^4, L_{k} \cup S^3)$ depends only on $\chi(L_{k} \cup S^3, S^3)$.  Excision shows that 
$$\chi(L_{k} \cup S^3, S^3) = \chi(L_{k},\leg_N),$$
which does not depend on $k$ by Chantraine's theorem (\ref{eq:tb-euler}).  Thus, $\chi(X_{k},Y)$ does not depend on $k$.

On the other hand, the rank of $H_3(X_{k}, Y)$ does depend on $k$.  Theorem 2 of \cite{lee-weintraub} implies that the connecting homomorphism $$\partial_*:  H_4(B^4, S^3) \to H_3(B^4, L_{k} \cup S^3)$$ is injective, and hence it suffices to compute $H_3(B^4, L_{k} \cup S^3)$.  Since $H_3(B^4, S^3)$ and $H_2(B^4, S^3)$ obviously vanish, the long exact sequence of the triple $(B^4, L_{k} \cup S^3, S^3)$ and excision yield the isomorphisms
\begin{equation} \label{eq:les+exc}
  H_3(B^4, L_{k} \cup S^3) \simeq H_2(L_{k} \cup S^3, S^3) \simeq H_2(L_{k}, \leg_N).
\end{equation}

Since $L_{k}$ has $k$ components, the rank of $H_2(L_{k}, \leg_N)$ is $k$.  The lemma now follows from (\ref{eq:2x-les}) and (\ref{eq:les+exc}).
\end{proof}

\subsection{Higher-Dimensional Fillings}
\label{ssec:high-d}

We finish our exploration of the applications of Theorem~\ref{thm:leg-polyfillable} by examining its generalization to higher dimensions and the consequences of the generalization for the generating family homology and linearized Legendrian contact homology invariants. 

\subsubsection{Connected Polyfillable Legendrian Submanifolds}

To generalize Theorem~\ref{thm:leg-polyfillable} to higher dimensions, we need two further constructions of Lagrangian cobordisms.  The first is \dfn{spinning}.  Suppose that a front diagram for a Legendrian submanifold $\leg \subset \rr^{2n+1}$ lies in the half-space $x_1>0$.  The front spinning construction of Ekholm, Etnyre, and Sullivan \cite{ees:high-d-geometry} produces a new Legendrian submanifold $\Sigma \leg \subset \rr^{2n+3}$ that is homeomorphic to $\leg \times S^1$.  Golovko \cite{golovko:tb} extended this construction to Lagrangian cobordisms:  a Lagrangian cobordism $\leg_- \prec_L \leg_+$ lying in the $x_1>0$ half space may be spun to a Lagrangian cobordism $\Sigma \leg_- \prec_{\Sigma L} \Sigma \leg_+$.

The second construction is a generalization of the $1$-handle attachment in Theorem~\ref{thm:construct} to Legendrian surfaces in $\rr^5$.  As formulated in \cite{bst:construct}, but see also \cite{rizell:surgery}, if two Legendrian surfaces $\leg_-$ and $\leg_+$ in the standard contact $\rr^5$ are related as in Figure~\ref{fig:2d-1-handle}, then there is an oriented Lagrangian cobordism $\leg_- \prec_L \leg_+$.  The cobordism consists of attaching a Lagrangian $1$-handle to $(-\infty, -T] \times \leg_- \subset \rr \times \rr^5$.

\begin{figure}
\begin{center}
\includegraphics[width=.9\linewidth]{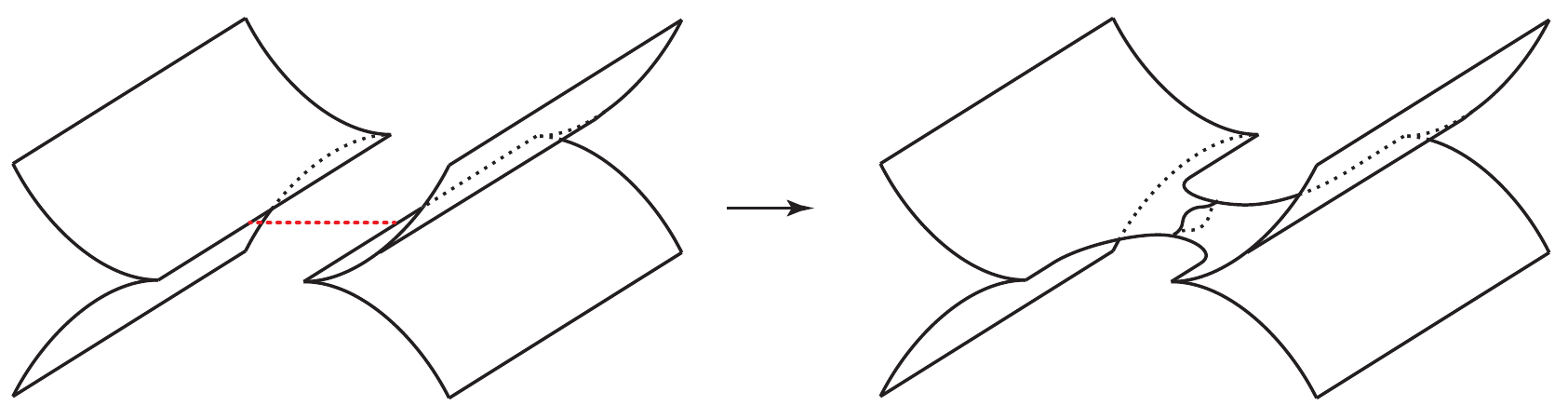}
\caption{Attaching a Lagrangian $1$-handle to a Legendrian surface $\leg_-$.  The resulting Lagrangian cobordism is orientable if the orientations of $\leg_-$ are opposite on either side of the dotted line.}
\label{fig:2d-1-handle}
\end{center}
\end{figure}

We now have the tools to prove Theorem~\ref{thm:high-d-leg-polyfillable}.

\begin{proof}[Proof of Theorem~\ref{thm:high-d-leg-polyfillable}]
	We begin with the link $\leg_N$ of Corollary~\ref{cor:polyfillable-diff-num-comp}, whose components are all trefoil knots.  Note that $tb(\leg_N) = N$.  We make a slight modification to $\leg_N$ using one Reidemeister I move for all but the bottom-most component; see Figure~\ref{fig:spin-trefoil}. By Theorem~\ref{thm:construct}, this modification does not change the fillability properties of the link.  If a filling $L_k$ of $\leg_N$ has $k$ connected components, then Equation~(\ref{eq:tb-euler}) implies that 
	\begin{equation} \label{eq:dim-L}
		\rk H_1(L_k) = N+k.
	\end{equation}
	
\begin{figure}
\begin{center}
\includegraphics{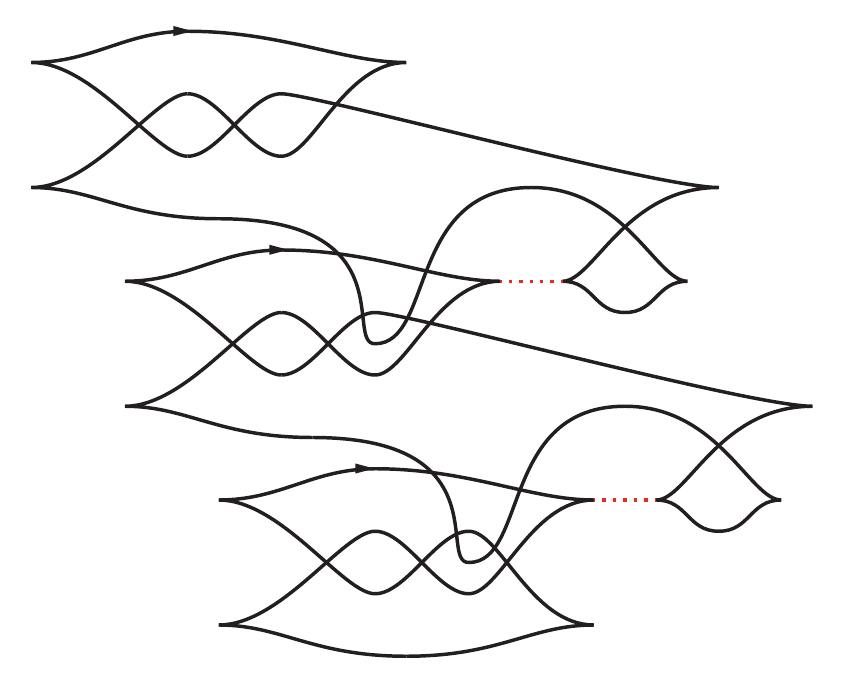}
\caption{Spinning the pictured link and then performing $1$-handle attachments at the dotted lines yields a connected polyfillable Legendrian surface.}
\label{fig:spin-trefoil}
\end{center}
\end{figure}
	
	Spin $\leg_N$ to a Legendrian surface $\Sigma \leg_N$ in $\rr^5$, and spin the fillings $L_k$ to fillings $\Sigma L_k$ of $\Sigma \leg_N$.  The K\"unneth theorem and Equation (\ref{eq:dim-L}) imply that
	\begin{equation} \label{eq:dim-spin-L-1}
		\rk H_1(\Sigma L_k) = N+2k.
	\end{equation}
	
	Finally, attach $N-1$ Lagrangian $1$-handles at the positions indicated in Figure~\ref{fig:spin-trefoil} to obtain Lagrangian fillings $\Sigma'L_k$ of a new Legendrian surface $\leg'_N$.  The resulting Legendrian is connected, as are all of the resulting Lagrangian fillings.  The fillings are still topologically distinct, however, as we may compute
	\begin{equation} \label{eq:dim-spin-L-2}
		\rk H_1(\Sigma' L_k) = 2k + 1.
	\end{equation}

	Finally, we may repeatedly spin the resulting Legendrian and its fillings to extend the result to Legendrian submanifolds of arbitrarily high dimension.
\end{proof}

\begin{rem}
	The Legendrian submanifolds constructed in the proof above are the product of a genus $N$ surface with an $(n-2)$-dimensional torus.  It is possible to use either a refinement of the spinning construction for fronts and cobordisms with boundary in \cite{bst:construct} or a sequence of higher embedded surgeries as in \cite{bst:construct, rizell:surgery} to produce polyfillable Legendrian \emph{spheres} of dimension $n>1$.
\end{rem}

\subsubsection{Chekanov Numbers in Higher Dimensions} \label{sssec:high-d}

Theorem~\ref{thm:high-d-leg-polyfillable} has implications for the structure of non-classical Legendrian invariants.  In this subsection only, we assume familiarity with generating family invariants of Legendrian submanifolds, as explained in \cite{f-r, lisa-jill,josh-lisa:obstr, lisa:links}, and with Legendrian contact homology and its linearizations, as explained in \cite{chv, ees:high-d-geometry}.  The linearized contact homology invariant is a collection of graded (homology) groups parametrized by augmentations of the Legendrian contact homology differential graded algebra. Similarly, the generating family invariant of a Legendrian $\leg$ takes the form of a collection of graded (homology) groups, parametrized by equivalence classes of generating families for $\leg$.    The cardinality of these sets is, itself, a Legendrian invariant, dubbed the \dfn{Chekanov number} (resp.\ \dfn{GF number}) of a Legendrian submanifold.  These numbers are known to be equal for links in $\rr^3$ \cite{f-r} and are conjecturally equal in higher dimensions.

An early question in the theory of Legendrian contact homology, proposed by Ng \cite{lenny:computable} and resolved for Legendrian knots in $\rr^3$ by Melvin and Shrestha \cite{melvin-shrestha} (see also Sivek \cite{sivek:bordered-dga}), was whether the Chekanov number could ever be greater than $1$.  For Legendrian knots in $\rr^3$, the answer is yes; in fact, the Chekanov number can be arbitrarily high.  

Theorem~\ref{thm:high-d-leg-polyfillable} may be used to produce examples of Legendrian submanifolds of $\rr^{2n+1}$ with arbitrarily high Chekanov / GF numbers.

\begin{prop} \label{prop:high-d-chv-num}
	For any integers $n>1$ and $N>0$, there exists a connected Legendrian submanifold $\leg \subset \rr^{2n+1}$ with Chekanov / GF number at least $N$.
\end{prop}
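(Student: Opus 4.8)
The plan is to connect the topological invariants of the fillings $\Sigma' L_k$ established in the proof of Theorem~\ref{thm:high-d-leg-polyfillable} to the Chekanov and GF numbers via a filling-to-invariant correspondence. The key point is that each exact orientable Lagrangian filling of a Legendrian submanifold $\leg$ induces both an augmentation of the Legendrian contact homology DGA and an equivalence class of generating families for $\leg$, and the resulting linearized contact homology (resp.\ generating family homology) group is determined — via a duality long exact sequence of Seidel--Ekholm--Sabloff type — by the homology of the filling itself. Consequently, fillings that are topologically distinct give rise to invariant groups of distinct rank, forcing the corresponding augmentations (or generating families) to be inequivalent, which in turn bounds the Chekanov / GF number from below.

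First I would invoke the constructions of Theorem~\ref{thm:high-d-leg-polyfillable} to produce, for the given $n$ and for a suitable $N'$ depending on $N$, a connected Legendrian submanifold $\leg \subset \rr^{2n+1}$ together with $N'$ topologically distinct connected exact orientable Lagrangian fillings $\Sigma' L_1, \ldots, \Sigma' L_{N'}$, distinguished by the ranks of their first homology as recorded in Equation~(\ref{eq:dim-spin-L-2}). Next I would recall the standard fact (from the work on generating families in \cite{josh-lisa:obstr, lisa:links} and on contact homology augmentations in \cite{ees:high-d-geometry}) that an exact Lagrangian filling yields both an augmentation and a generating family, and that the associated linearized / generating family homology is isomorphic to the homology of the filling rel boundary in the relevant degrees. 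The third step is the separation argument: since the $N'$ fillings have pairwise distinct Betti numbers, the associated invariant groups are pairwise non-isomorphic, so no two fillings can induce the same augmentation or the same equivalence class of generating family; hence the parametrizing sets have cardinality at least $N'$, giving Chekanov / GF number at least $N' \geq N$.

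The main obstacle I anticipate is making precise the passage from a filling to its invariant group and ensuring the isomorphism with the filling's homology is sharp enough to distinguish the fillings. In particular, one must confirm that the duality sequence relating linearized contact homology to the homology of the filling is available in the higher-dimensional setting of $\rr^{2n+1}$ and that it genuinely separates fillings whose first Betti numbers differ by the amounts in~(\ref{eq:dim-spin-L-2}); any degree shift or truncation in the duality must be tracked so that the distinguishing homological data is not washed out. A secondary technical point is verifying that the spinning and Lagrangian $1$-handle constructions preserve the exactness and the generating-family / augmentation structure, so that each $\Sigma' L_k$ legitimately contributes a distinct element to the parametrizing set; granting the results quoted earlier, this should reduce to a bookkeeping argument rather than a new geometric input.
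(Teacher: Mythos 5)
Your proposal is correct and takes essentially the same route as the paper: invoke the fillings of Theorem~\ref{thm:high-d-leg-polyfillable}, use the isomorphism between generating family homology (resp.\ linearized contact homology) and the homology of the filling rel boundary --- the paper's Equation~(\ref{eq:filling-iso}) from \cite{josh-lisa:obstr}, with the Chekanov-side analogue stated by Ekholm and proven by Rizell --- and conclude that fillings with distinct Betti numbers yield non-isomorphic invariant groups, hence pairwise inequivalent generating families and augmentations. The one claim to correct is your opening assertion that \emph{every} exact orientable Lagrangian filling induces an equivalence class of generating families, which is stronger than what is known; the paper instead uses that its specific fillings, being built from isotopies, handle attachments, and spinning, are \emph{compatible} with generating families by the results of \cite{bst:construct} (arranging that $1$-handles join cusps whose fiber critical points have equal Morse index), which is exactly the verification you defer to your final paragraph, so your argument goes through once stated in that form.
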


\begin{proof}
	We first prove the proposition for the GF number.	The proof relies on the fact, proved in \cite{josh-lisa:obstr}, that if a Lagrangian filling $L$ of a Legendrian submanifold $\leg$ is ``compatible'' with a generating family $f$ of $\leg$, then:
	\begin{equation} \label{eq:filling-iso}
		GH^*(f) \simeq H^{*+1}(L,\leg).
	\end{equation}
	Thus, if the Legendrian $\leg$ has $N$ compatible Lagrangian fillings, all of which have different relative cohomology, then the GF number of $\leg$ is at least $N$.  
	
	It suffices to prove, then, that the Lagrangian fillings $L_k$ constructed in the previous subsection are all compatible with a generating family of $\leg$. This, however, is a straightforward affair given the results of \cite{bst:construct}:  the isotopy, handle attachment, and spinning constructions discussed in Theorem~\ref{thm:construct} and  Section~\ref{sssec:high-d} all produce compatible Lagrangian cobordisms, so long as the $1$-handle attachments join cusps that represent births and deaths of fiber critical points of the generating family with the same Morse index; this  is easy to arrange.

	The proof of the proposition for the Chekanov number follows the same argument, with Equation~(\ref{eq:filling-iso}) replaced by a similar result that was stated and outlined by Ekholm \cite{ekholm:lagr-cob} and proven by Rizell \cite{rizell:lifting}.
\end{proof}

\begin{rem}
	Examples that prove the proposition above could also be constructed by spinning the examples of Melvin-Shrestha or of Sivek and using the techniques of Ekholm and K\'alm\'an \cite{ek:isotopy-tori} to compute the Legendrian contact homology. On the other hand, for each of Melvin-ShresthaÕs or SivekÕs examples, only one of the linearized contact or generating family homologies arises from a filling (as in Equation~(\ref{eq:filling-iso}), whereas all of the linearized contact or generating family homologies constructed in the proof above arise from Lagrangian fillings.
\end{rem}

\bibliographystyle{amsplain} 
\bibliography{main}

\end{document}